
\documentclass[preprint,12pt]{elsarticle}




\usepackage{amssymb}


\usepackage[mathscr]{eucal}
\usepackage[all]{xy} 
\usepackage{graphicx}
\usepackage{caption}
\usepackage{subcaption}
\usepackage{multicol}
\usepackage{amssymb,amsfonts,amsthm}    
\usepackage{color}                              
\usepackage[centertags]{amsmath}
\usepackage{amsfonts,amssymb,amsthm,newlfont}
\usepackage{geometry}
\usepackage{algorithmic,algorithm}
\usepackage{hyperref}
\usepackage{mathrsfs}
\usepackage{enumerate}

\usepackage{nomencl}

\theoremstyle{plain}
\newtheorem{thm}{Theorem}[section]

\newtheorem{lemma}[thm]{Lemma}

\newtheorem{definition}[thm]{Definition}
\newtheorem{remark}[thm]{Remark}

\numberwithin{equation}{section}
\numberwithin{algorithm}{section}

\def\te{\theta_{\epsilon}}
\def\tz{z_{\epsilon}}
\def\tr{\rho_{\epsilon}}

\begin{document}

\begin{frontmatter}



\title{An Approximating Control Design for Optimal  Mixing by Stokes  Flows}
\author{Weiwei Hu}


 \ead{weiwei.hu@okstate.edu}

\address{Department of Mathematics,
416 Math Science Building,
Oklahoma State University,
Stillwater, OK 74078}

\begin{abstract}
We consider   an approximating  control design  for  optimal mixing of
a non-dissipative scalar field $\theta$ in an unsteady Stokes flow.
The objective of our approach  is to achieve optimal mixing  at a given final time  $T>0$, via  the active control of the  flow velocity $v$ through boundary inputs.   Due to  zero diffusivity of the scalar field $\theta$, establishing the well-posedness of its G\^{a}teaux derivative requires 
$\sup_{t\in[0,T]}\|\nabla \theta\|_{L^2}<\infty$, which in turn demands the flow velocity field to satisfy the condition 
$ \int^{T}_{0}\|\nabla v\|_{L^{\infty}(\Omega)}\, dt<\infty$. This condition results in the need to penalize the time derivative of the boundary control in the cost functional.   Consequently,   the optimality system becomes  difficult to solve \cite{hu2017boundary}. 
Our current approximating  approach provides  a more transparent  optimality system, with  the set of admissible controls square integrable in space-time.  This is achieved  by first introducing a small diffusivity  to the scalar equation and then establishing  a rigorous analysis of  convergence of the approximating control  problem to the original one as the diffusivity approaches  to zero. Uniqueness of the optimal solution  is obtained for the two dimensional case.

\end{abstract}






\end{frontmatter}



\noindent \textbf{Key words}
 \quad Approximating Control, Optimal Mixing, Unsteady Stokes Flow, Navier Slip Boundary Conditions

\noindent \textbf{AMS subject classifications}
\quad 35Q93, 37A25, 49J20, 49K20, 76B75, 76F25

\section{Introduction}
Consider a passive scalar field  advected by an  unsteady  Stokes  flow  on an open bounded and connected  domain $\Omega\subset \mathbb{R}^{d}$, where $d=2,3,$ with a sufficiently smooth boundary $\Gamma$. The scalar field is governed by the transport equation,  where the molecular diffusion is  assumed to be negligible and mixing is purely  driven by  advection. This naturally leads to  the study of optimal mixing  via an active control of the flow velocity. As discussed in   our previous work \cite{hu2017boundary},  we consider the flow velocity  induced  by   control inputs  acting tangentially  on the boundary of the domain through the  Navier slip boundary conditions. This is motivated by  the observation that moving walls accelerate mixing compared to fixed walls (cf.~\cite{GDDRT, GKDDRT,GTD, OG, TGD}).  We aim at  designing a Navier slip  boundary  control  that optimizes mixing at a given final time. The governing system of equations is 
  \begin{align}
      & \frac{\partial \theta}{\partial t}+ v\cdot \nabla \theta=0,  \label{EQ01}\\
&\frac{\partial v}{\partial t} - \Delta v+ \nabla p=0,   \label{Stokes1}\\
&\nabla \cdot v = 0, \quad x\in\Omega,  \label{Stokes2}
  \end{align}
with the Navier slip boundary conditions  (cf.~\cite{ hu2018boundary, kelliher2006navier,navier}),
      \begin{align}
   v\cdot n|_{\Gamma}=0 \quad   \text{and}   \quad kv+(\mathbb{T}(v)\cdot n)_{\tau} |_{\Gamma}=g, \label{Stokes3}
    \end{align}
    and the initial condition  is given by 
\begin{align}
(\theta(0), v(0))=(\theta_{0}, v_{0}),\label{ini}
\end{align} 
    where $\theta$ is the density, $v$ is the velocity,  $p$ is the pressure, and  $g$ is the boundary control input, which is employed  to  generate the velocity field for mixing.   Navier slip boundary conditions admit the fluid to slip with resistance on the boundary.  Here  $n$ and $\tau$ denote the outward unit normal and tangentially vectors with respect to the domain $\Omega$, $\mathbb{T}(v)=2 \mathbb{D}(v)$ with $\mathbb{D}(v)=(1/2)(\nabla v+(\nabla v)^{T})$, $(\mathbb{T}(v)\cdot n)_{\tau}$ denotes the tangential component of $(\mathbb{T}(v)\cdot n)$, and  $g\cdot n|_{\Gamma}=0$. The   friction between the fluid and the wall is proportional to $-v$ with the
positive coefficient of proportionality  $k$.  

Due to the divergence-free and no-penetration boundary conditions imposed on the  velocity field, it can be shown that any $L^p$-norm of $\theta$ is conserved (cf.~\cite{HKZ, hu2017boundary}), i.e.,
\begin{align}
\Vert\theta(t)\Vert_{L^{p}}= \Vert\theta_{0}\Vert_{L^{p}} , \quad t\geq 0, \quad p\in [0,\infty].  \label{EST_theta_Lp}
\end{align}
To qualify mixing,  the mix-norm and negative Sobolev norms $H^{-s}$, for any $s>0$, are usually adopted, especially for  the scalar field with  no molecular diffusion, based on ergodic theory (cf.~\cite{LTD, LLNMD, MMGVP, MMP, Th}).  The bridge that connects mixing with  negative Sobolev norms  is the  property of weak convergence. As discussed  in our previous work, we consider a general bounded domain for mixing and  replace the negative Sobolev norm  by the norm for the dual space $(H^{s}(\Omega))'$ of $H^{s}(\Omega)$ with $s>0$. Also, we identify the  space $(H^{s}(\Omega))'$, where $ s>0$,  as the domain of operator $\Lambda^{-s}$ equipped with the norm $\|\cdot\|_{(H^{s}(\Omega))'}$,  where $\Lambda $ is self-adjoint, positive and unbounded in $L^{2}(\Omega)$ (cf.~\cite[p.~9]{lions1971}). Thus,
 $\Lambda^{2s}\in \mathcal{L}(H^{s}(\Omega), (H^{s}(\Omega))')$.    In our current work, we continue to adopt $\|\cdot\|_{(H^1(\Omega))'}$
for  qualifying mixing as in \cite{hu2017boundary}.     
In particular, we choose $\Lambda=\mathcal{A}^{-1/2}$, where $\mathcal{A}$ is given by
\begin{align*}
  \mathcal{A}\phi=(-\Delta+I)\phi, \quad \phi  \in D(\mathcal{A})=\{\phi\in H^2(\Omega)\colon\frac{\partial \phi}{\partial n}|_{\Gamma}=0 \}.
    \end{align*} 
%
Then  $D(\Lambda)=D(\mathcal{A}^{1/2})=H^1(\Omega)$  and $D(\Lambda^{-1})=D(\mathcal{A}^{-1/2})=(H^1(\Omega))'$.

Throughout  this paper, we use $(\cdot, \cdot)$  and $\langle \cdot, \cdot \rangle$ for the $L^{2}$-inner products in 
the interior of the domain $\Omega$ and on the boundary $\Gamma$, respectively. 
To set up the abstract formulation for the velocity field, we
define 
\begin{align*}
V^{s}_{n}(\Omega)&=\{ v \in H^{s}(\Omega): \text{div}\ v=0,\  v\cdot n|_{\Gamma}=0\}, \quad  s\geq 0, \nonumber\\
V^{s}_{n}(\Gamma)&=\{ g \in H^{s}(\Gamma): g\cdot n|_{\Gamma}=0\}, \quad  s\geq 0.
\end{align*}

 The optimal control problem is  formulated as follows: For a given $T>0$, find  a control $g$ minimizing the cost functional 
\begin{align*}
J(g)=\frac{1}{2} \|\theta(T)\|^{2}_{(H^{1}(\Omega))'}
+\frac{\gamma}{2}  \| g\|^{2}_{U_{\text{ad}}}, \qquad (\text{P})
\end{align*}
subject to \eqref{EQ01}--\eqref{ini}, where $\|\theta(T)\|_{(H^{1}(\Omega))'}=\|\Lambda^{-1}\theta(T)\|_{L^{2}(\Omega)}$,  
 $\gamma>0$ is the control weight parameter, and $U_{\text{ad}}$ is   the set of admissible controls, which is often determined based on the physical properties as well as the need to establish the well-posedness of the problem, i.e., the existence of an optimal solution.     In fact, the existence of an optimal solution to the problem~$(P)$ can be proven for $U_{ad}=L^{2}(0, T; V^0_{n}(\Gamma))$.  The challenge arises in deriving the first-order necessary conditions of optimality. To establish the well-posedness of the G\^ateaux derivative of $\theta$, one needs $\sup_{t\in[0,T]}\|\nabla \theta\|_{L^2}<\infty,$ which requires 
$\theta_0\in H^{1}(\Omega)$ and  the flow velocity to satisfy   
\[ \int^{T}_{0}\|\nabla v\|_{L^{\infty}(\Omega)}\, dt<\infty.\]
 Therefore, the initial condition $v_0$ and $U_{\text{ad}}$ were chosen in a way such  that this estimate holds  \cite{hu2017boundary}. 
As a result, the time regularity of $g$ was needed. For computational convenience,  the first derivative $\partial{g}/\partial{t}$ was adopted rather than the  lower order  fractional time derivative  in the cost  functional.   Consequently,  the optimality condition involved the  time derivative of $g$, and  thus the optimality system became difficult to further analyze the  uniqueness of the solution.

\subsection{An approximating control approach }
In this work, we start with  investigating   the  approximating control problem by adding a small diffusion term $\epsilon \Delta \theta$, for $\epsilon>0$,  to the transport equation. The problem is now  formulated as follows:  For  a given $T>0$, find  a control $g_{\epsilon}\in U_{\epsilon_{\text{ad}}}=L^{2}(0, T; V^0_n(\Gamma))$ minimizing the cost functional 
\begin{align*}
J_{\epsilon}(g_{\epsilon})=\frac{1}{2} \|\te(T)\|^{2}_{(H^{1}(\Omega))'}
+\frac{\gamma}{2}  \| g_{\epsilon}\|^{2}_{U_{\epsilon_\text{ad}}}, \quad \epsilon>0,\qquad (\text{P}_{\epsilon})
\end{align*}
subject to  an approximating system governed by
  \begin{align}
   & \frac{\partial \te}{\partial t}-\epsilon \Delta \te+ v_{\epsilon}\cdot \nabla \te=0,  \label{App_EQ01}\\
&\frac{\partial v_{\epsilon}}{\partial t} - \Delta v_{\epsilon}+ \nabla p_{\epsilon}=0,   \label{App_Stokes1}\\
&\nabla \cdot v_{\epsilon} = 0, \quad x\in\Omega,  \label{App_Stokes2}
  \end{align}
 with the Neumann boundary condition  for the scalar 
        \begin{align}
      \epsilon \frac{\partial \te}{\partial n}\Bigm|_{\Gamma}= 0\label{App_BC_EQ01}
      \end{align}
and the nonhomogenous Navier slip boundary conditions for the velocity 
  \begin{align}
   v_{\epsilon}\cdot n|_{\Gamma}=0 \quad   \text{and}   \quad( kv_{\epsilon}+(\mathbb{T}(v_{\epsilon})\cdot n)_{\tau}) |_{\Gamma}=g_{\epsilon}.\label{App_Stokes3}
    \end{align} 
    The   initial condition  is given by 
\begin{align}
(\te(0), v_{\epsilon}(0))=(\theta_{0}, v_{0}). \label{App_ini}
\end{align} 
Note that due to one-way coupling, the flow velocity $v$ does not depend on $\epsilon$, and thus we have 
\begin{align}
v_{\epsilon}=v. \label{v_epsilon}
\end{align}
   However, to distinguish the approximating system from the original one, we still use  the notation $v_{\epsilon}$. 

The outline of the rest of this paper is as follows. We first recall the basic results on Navier slip boundary control for the Stokes problem in Section~\ref{pre}. In Section~\ref{App_sys}, we establish  the convergence of the approximating system governed by \eqref{App_EQ01}--\eqref{App_ini} to the original one governed by  \eqref{EQ01}--\eqref{ini}. Then in Section  \ref{opt_appr_sys}  we show the existence of an optimal solution to the approximating control problem
$(P_{\epsilon})$ and derive the first-order necessary conditions of optimality  by using a variational  inequality. Moreover,  we prove that the optimal solution $(g^{*}_{\epsilon}, v^{*}_{\epsilon}, \theta^*_{\epsilon})$  to the problem~$(P_{\epsilon})$ strongly converges to $(g^{*}, v^{*}, \theta^*)$  as $\epsilon\to 0$, which turns out to be the optimal solution to the original problem $(P)$. Finally, in Section \ref{uniqueness} we prove  that  $(g^{*}, v^{*}, \theta^*)$ is unique  for $d=2$ and $\gamma$ sufficiently large.

 In the sequel, the symbol $C$ denotes a generic positive constant, which is allowed to depend on the domain as well as on indicated parameters.

\section{Preliminary }
\label{pre}
Note that boundary control of the flow velocity  essentially leads to a bilinear control problem for  the scalar equation. As a result of one-way coupling, it is key to  understand the  boundary  control problem of the Stokes equations.  For the convenience of the reader, we recall some  results introduced in \cite{hu2017boundary} on Navier slip boundary control for  Stokes flows. 
In fact, the problems of fluild flows  with Navier slip boundary conditions have been widely studied  in \cite{Beirao2004, clopeau1998vanishing, coron1996controllability,  filho2005inviscid, hu2017partially, hu2018boundary, kelliher2006navier,  lions1969quelques, lions1998mathematical}.

To define the Stokes operator associated with Navier slip boundary conditions, we  introduce 
the  bilinear form  
\begin{align*}
a_0(v, \psi) =2( \mathbb{D}(v),\mathbb{D} (\psi) ) +k\langle v, \psi\rangle,  \quad k>0, \quad  v,\psi \in V^1_{n}(\Omega). 
 \end{align*}
By  Korn's inequality and
trace theorem, it is easy to check that 
$
c_1 \Vert v\Vert^{2}_{H^{1}}\leq a_{0}(v, v)\leq c_2 \Vert v\Vert^{2}_{H^{1}}
$, for some constants $c_1, c_2>0$. Thus  $a_{0}(\cdot, \cdot)$ is $H^1$-coercive. 
Let $(V^{1}_n(\Omega))'$ be the dual space of $V^1_{n}(\Omega)$. Define the operator $A\colon   V^{1}_{n}(\Omega) \rightarrow (V^1_n(\Omega))'$ by 
\begin{align}
  \quad (Av,\psi)=a_{0}(v, \psi). \label{A_a0}
 \end{align}
 The Lax-Milgram Theorem implies  that 
$A\in \mathcal{L}(V^{1}_{n}(\Omega), (V^{1}_n(\Omega))')$.
This  also allows us to identify 
$A$ as an operator acting on $V^{0}_{n}(\Omega)$  with  the domain
 \begin{align*}
 \mathscr{D}(A)&=\{v\in V^{1}_{n}(\Omega)\colon  \psi\mapsto a_0(v, \psi)\ \text{is}\  L^2\text{-continuous} \}.
\end{align*}
In fact, as shown in \cite[(2.9)]{hu2017partially} and  \cite[(5.1)]{kelliher2006navier},  for $v\in V^2_n(\Omega)$  satisfying  the homogenous Navier  slip boundary conditions in \eqref{Stokes3} and  $\psi\in V^1_n(\Omega)$, we have
\begin{align} 
\int_{\Omega} \Delta v\cdot \psi\,dx=-2 \int_\Omega \mathbb{D}(v) \cdot \mathbb{D}(\psi)\, dx 
- \int_{\Gamma}k(v\cdot \tau) ( \psi\cdot \tau)\, dx. 
\label{hh1}
\end{align} 
Thus  \eqref{A_a0}--\eqref{hh1} define the Stokes  operator $ A=-\mathbb{P}\Delta$ with domain 
\[\mathscr{D}(A)=\{v\in V^2_n(\Omega)\colon  kv+(\mathbb{T}(v)\cdot n)_{\tau} |_{\Gamma}=0 \},
\]
 where $\mathbb{P}$ is the Leray projector in $L^2(\Omega)$ on the space $V^0_n(\Omega)$. 
Note that $A$ is self-adjoint, strictly positive, and thus the fractal powers $A^{\sigma}$,  for $ \sigma\in \mathbb{R}$, are well-defined. 
 By  interpolation  theory (cf.~\cite{hu2018boundary, LT2000,  lions1971}), the Navier slip boundary conditions allow us to identify  the  domains of
 $A^{\sigma}$  for $0\leq\sigma\leq 1$ as 
\begin{align}
\mathscr{D}(A^{\sigma})&=V^{2 \sigma}_{n}(\Omega), \quad 0\leq \sigma < \frac{3}{4}, \quad \text{and} \nonumber\\
\mathscr{D}(A^{\sigma})&= \{v\in V^{2\sigma}_{n}(\Omega)\colon kv+(\mathbb{T}(v)\cdot n)_{\tau} |_{\Gamma}=0\},\quad \frac{3}{4} < \sigma \leq 1.\label{1frac_D_Aw}
\end{align}
The detailed proof is given by \cite[Proposition  2.4]{hu2018boundary}. The Navier slip boundary operator $N\colon L^{2}(\Gamma)\to V^{0}_{n}(\Omega)$ is defined by 
 \begin{align*}
 Ng=v \iff  a_0(v, \psi)=\langle g, \psi\rangle, \quad  \psi\in V^{1}_{n}(\Omega). 
 \end{align*}
 Moreover, 
 \begin{align*}
 N\colon L^{2}(\Gamma)\to V^{3/2}_{n}(\Omega)\subset V^{3/2-\delta}_{n}(\Omega) =\mathscr{D}(A^{3/4-\delta/2}),\quad
 \delta>0, 
 \end{align*}
or
 \begin{align*}
 A^{3/4-\delta/2}N\in \mathcal{L}(L^{2}(\Gamma), V^0_{n}(\Omega)), 
 \end{align*}
and 
  \begin{align}
  N^* A\psi=\psi|_{\Gamma}, \quad \psi\in \mathscr{D}(A), \label{adj_N}
\end{align}
where $N^*\colon V^0_n(\Omega)\to L^{2}(\Gamma)$ is the $L^2$-adjoint operator  of $N$ (cf.~\cite{badra2010abstract, hu2017boundary, hu2018boundary, LT2000}).
 By making a change of variable,  we may rewrite the  nonhomogenous boundary problem \eqref{Stokes1}--\eqref{Stokes3} as a  variation of parameters formula 
\begin{align}
   v(t)=e^{-At}v_{0}+(Lg)(t),\label{var_form}
    \end{align}
where  $e^{-At}$ is an analytic semigroup generated by $-A$ on $V^{0}_{n}(\Omega)$ and  $L$ is given by
\begin{align}
(Lg)(t)=\int^{t}_{0} Ae^{-A(t-\tau)}Ng(\tau)\, d\tau.  \label{L_oper}
\end{align}
Recall  the  analytic semigroup theory that  (cf.~\cite[Proposition 0.1]{LT2000}, \cite[p.~74, Theorem 6.13]{pazy1983})
\begin{align}
&e^{-At}\in \mathcal{L}\left(V^0_n(\Omega), L^{2}(0,T; \mathcal{D}(A^{1/2}))\right), \label{semigroup}\\
&\|A^{\sigma}e^{-At} \|\leq Mt^{-\sigma}e^{-\omega t}, \quad \sigma\geq 0, \label{EST_semigroup}
\end{align}
for some $M\geq 1$,  $\omega>0$,  and
\begin{align}
&\int^t_0 e^{A(t-\tau)}\cdot\, d\tau\colon \text{continuous}\ L^2(0, T; V^0_n(\Omega))\to L^2(0, T; \mathcal{D}(A)). \label{convolu}
\end{align}
To understand  the regularity  properties of $L$, we follow the similar approaches as in  (cf.~\cite[Theorem 3.1.4, Theorem 3.1.8]{BLT1}, \cite[Lemmas 3.2.2--3.2.3]{LT2000}, and \cite[Theorems 2.5--2.6]{R12007}) for $v\cdot n|_{\Gamma}=0$ and obtain 
\begin{align}
L\in  \mathcal{L}&\big( L^{2}(0,T; V^{2s}_n(\Gamma)\cap H^{s}(0,T; V^{0}_n(\Gamma)), \nonumber\\
&L^{2}(0,T; V^{2s+3/2}_{n}(\Omega))\cap H^{s+3/4}(0,T; V^0_n(\Omega)\big), \quad  0 \leq s<1/2. \label{5L_oper}
\end{align}
 For $1/2<s\leq 1$, \eqref{5L_oper} holds for  $g(0)=0$.
 This result  is  the same as for classical parabolic problems with Neumann or Robin boundary condition   due to \eqref{adj_N} that $N^*A$ is a Dirichlet trace operator in the case of $v\cdot n|_{\Gamma}=0$. 
 
For $v_0\in V^0_n(\Omega)$, using      \eqref{var_form}   together with \eqref{semigroup}--\eqref{5L_oper} immediately follows  
\begin{align}
\|v\|_{L^{\infty}(0, T;  V^0_n(\Omega))}+\|v\|_{L^{2}(0, T;  V^1_n(\Omega))} +\|\frac{dv}{dt}\|_{L^{2}(0, T; (V^{1}_n(\Omega))')}
\leq C(\|v_{0}\|_{L^2}+\|g\|_{L^2(0,T;V^{ 0}_n(\Gamma)}). \label{EST_v_L2}
\end{align}
      Moreover, if we let $L^*$ be  the $L^2(0, T; \cdot)$-adjoint operator  of $L$, then $L^*$  is given by 
  \begin{align}
(L^{*}\psi)(t)=\int^{T}_{t} N^{*}Ae^{-A(\tau-t)}\psi(\tau)\, d\tau=(\int^{T}_{t} e^{-A(\tau-t)}\psi(\tau)\, d\tau)|_{\Gamma}.
  \label{L_adj}
\end{align}
Slightly modifying the proof in   \cite[Theorem 3.1.9]{BLT1} yields 
\begin{align}
L^{*} \in \mathcal{L}&\big(L^{2}(0,T;V^{2s}_{n}(\Omega))\cap H^{s}(0,T;V^{0}_{n}(\Omega)),  \nonumber\\
 &L^{2}(0,T; V^{2s+3/2}_n(\Gamma))\cap H^{s+3/4}(0,T; V^{0}_{n}(\Gamma))\big), \quad    0\leq s\leq 1.
\label{2L_adj}
    \end{align}
In particular,   letting $s=0$ in \eqref{5L_oper} we have by duality
\begin{align}
L^{*} \in \mathcal{L}&\left(L^{2}(0,T; (V^{3/2}_n(\Omega))'), L^{2}(0,T; V^{0}_n(\Gamma))\right).
\label{3L_adj}
    \end{align}

Next we show the existence of an optimal solution to the problem~$(P)$ for $g\in U_{\text{ad}}=L^{2}(0, T; V^0_n(\Gamma))$.  The proof follows the  standard procedure addressed  in \cite[Theorem~3.2]{hu2017boundary}. First recall the definition of a weak solution to the scalar equation \eqref{EQ01} given by \cite[Definition 3.1]{hu2017boundary}.

\section{Existence of an Optimal Solution to  $(P)$ }
  \begin{definition}
            \label{def1}
         For $\theta_{0}\in L^{\infty}(\Omega)$, $\theta\in C([0, T], (H^1(\Omega))')$  is said to be  a weak solution of equation \eqref{Stokes1}, if $\theta$ satisfies 
  \begin{align}
(  \frac{\partial \theta}{\partial t}, \phi)-(v \theta, \nabla \phi)&=0,  \quad  \forall\phi\in H^{1}(\Omega)\label{1weak_theta},
 \end{align}
where  $v\in L^{2}(0, T; V^{1}_{n}(\Omega))$ satisfies \eqref{var_form} for  $v_{0}\in V^{0}_{n}(\Omega)$ and $g\in V^{0}_{n}(\Gamma)$.
\end{definition}
In fact, according to \cite[Corollary II.1]{lions1998mathematical},  there exists a unique  solution  $\theta\in L^{\infty}(0, T; L^\infty(\Omega))$ to \eqref{EQ01} for $\theta_0\in L^\infty(\Omega)$ and $v\in L^{2}(0, T; V^{1}_{n}(\Omega))$. Moreover, it is straightforward  to check that 
\begin{align}
&\| \frac{\partial \theta}{\partial t}\|^2_{L^{2}(0, T; (H^1(\Omega))') }=\|v\cdot \nabla \theta\|^2_{L^2(0, T; (H^1(\Omega))' )}\nonumber\\
&\qquad=\int^T_0\left(\sup_{\phi\in H^{1}(\Omega)} \frac{|\int_{\Omega}v \cdot \nabla \theta \phi\, dx|}{\|\phi\|_{H^{1}}}\right)^2\, dt\nonumber\\
     &\qquad =\int^T_0\left( \sup_{\phi\in H^{1}(\Omega)} \frac{|\int_{\Omega}v \cdot \nabla (\theta \phi)\, dx-\int_{\Omega}v \theta \cdot \nabla \phi\, dx|}{\|\phi\|_{H^{1}}}\right)^2\, dt\label{0EST_theta_Dt}\\
      &\qquad\leq C\int^T_0\left(\sup_{\phi\in H^{1}(\Omega)} \frac{\|v\|_{L^{2}} \|\theta\|_{L^\infty} \| \phi\|_{H^1}}{\|\phi\|_{H^{1}}}\right)^2\, dt \label{1EST_theta_Dt}\\
       &\qquad\leq C\int^T_0\|v\|^2_{L^{2}} \|\theta\|^2_{L^\infty}\, dt
       =C \|\theta_0\|^2_{\infty}\|v\|^2_{L^2(0, T; L^2(\Omega))}\leq C(\theta_0, v_0, g), \nonumber
\end{align}
where from \eqref{0EST_theta_Dt} to \eqref{1EST_theta_Dt} we used the divergence formula  that 
\begin{align*}
\int_{\Omega}v \cdot \nabla (\theta \phi)\, dx=\int_{\Gamma}(v \cdot n)  \theta \phi\, dx-\int_{\Omega}(\nabla \cdot v) \theta \phi\, dx=0.
\end{align*}
Thus by Aubin-Lions-Simon Lemma (cf.~\cite{boyer2013mathematical}),  we get $\theta\in C([0, T], (H^1(\Omega))')$.

  \begin{thm}\label{existence_opt}
Assume that $\theta_{0}\in L^{\infty}(\Omega)$ and $v_{0}\in V^{0}_n(\Omega)$.  There exists an optimal solution  $g^*\in U_{\text{ad}}$  to the problem~$(P)$.
 \end{thm}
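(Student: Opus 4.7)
The plan is to apply the direct method of the calculus of variations. Since $J(g)\geq 0$ on $U_{\text{ad}}$, the infimum $m:=\inf_{g\in U_{\text{ad}}} J(g)$ is finite, and one can choose a minimizing sequence $\{g_n\}\subset U_{\text{ad}}=L^2(0,T;V^0_n(\Gamma))$ with $J(g_n)\to m$. The penalty term $(\gamma/2)\|g_n\|^2_{L^2(0,T;V^0_n(\Gamma))}$ is bounded along this sequence, so $\{g_n\}$ is bounded in the Hilbert space $U_{\text{ad}}$, and after extracting a subsequence (not relabeled) one has $g_n\rightharpoonup g^*$ weakly in $U_{\text{ad}}$.

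Next I would transport this weak convergence to the state variables. Letting $v_n$ denote the Stokes velocity generated by $(v_0,g_n)$ through the variation of parameters formula \eqref{var_form}, the a priori estimate \eqref{EST_v_L2} yields a uniform bound on $\{v_n\}$ in $L^\infty(0,T;V^0_n(\Omega))\cap L^2(0,T;V^1_n(\Omega))$ with $dv_n/dt$ bounded in $L^2(0,T;(V^1_n(\Omega))')$. By linearity of the map $g\mapsto v$, one gets $v_n\rightharpoonup v^*$ weakly in $L^2(0,T;V^1_n(\Omega))$ with $v^*$ the solution corresponding to $g^*$; by the Aubin–Lions–Simon lemma, up to a further subsequence $v_n\to v^*$ strongly in $L^2(0,T;V^0_n(\Omega))$. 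For the scalar, the conservation law \eqref{EST_theta_Lp} gives $\|\theta_n\|_{L^\infty(0,T;L^\infty(\Omega))}\leq\|\theta_0\|_{L^\infty}$, and the estimate computed in \eqref{0EST_theta_Dt}--\eqref{1EST_theta_Dt} gives a uniform bound on $d\theta_n/dt$ in $L^2(0,T;(H^1(\Omega))')$. Again by Aubin–Lions–Simon (after embedding $L^\infty\hookrightarrow(H^1)'$ compactly through an appropriate intermediate space, or more directly by compactness in $C([0,T];(H^1(\Omega))')$ via the uniform equicontinuity supplied by the $dt$-bound), a subsequence satisfies $\theta_n\to\theta^*$ strongly in $C([0,T];(H^1(\Omega))')$ and weakly-$*$ in $L^\infty(0,T;L^\infty(\Omega))$.

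Then I would identify $\theta^*$ as the weak solution of the transport equation driven by $v^*$. In the weak formulation $( \partial_t\theta_n,\phi)=(v_n\theta_n,\nabla\phi)$ integrated in time against a smooth test function, the left-hand side passes to the limit by the $(H^1)'$-convergence, while the right-hand side is the product of $v_n$ converging strongly in $L^2(0,T;L^2(\Omega))$ and $\theta_n$ converging weakly-$*$ in $L^\infty(0,T;L^\infty(\Omega))$, so $v_n\theta_n\rightharpoonup v^*\theta^*$ in, say, $L^2(0,T;L^2(\Omega))$. By uniqueness of the weak solution from \cite[Corollary II.1]{lions1998mathematical}, $\theta^*$ is the state associated with $g^*$, so $(g^*,v^*,\theta^*)$ is admissible.

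Finally, to conclude optimality I would invoke weak lower semicontinuity. The map $g\mapsto\|g\|^2_{U_{\text{ad}}}$ is convex and continuous, hence weakly lower semicontinuous, and the linear map $\theta_n(T)\mapsto\Lambda^{-1}\theta_n(T)$ together with the strong convergence $\theta_n(T)\to\theta^*(T)$ in $(H^1(\Omega))'$ gives $\|\theta^*(T)\|_{(H^1)'}^2=\lim\|\theta_n(T)\|_{(H^1)'}^2$. Combining,
\begin{equation*}
J(g^*)\leq\liminf_{n\to\infty}J(g_n)=m,
\end{equation*}
so $g^*$ is optimal. The main delicate point is the passage to the limit in the bilinear product $v_n\theta_n$: one must ensure that the compactness used for $v_n$ (strong $L^2$) and for $\theta_n$ (weak-$\ast$ $L^\infty$, together with strong $(H^1)'$ for the time trace) is compatible with evaluating $\theta^*(T)$ in the target norm $(H^1(\Omega))'$; this is why the $dt$-bound derived in \eqref{0EST_theta_Dt}--\eqref{1EST_theta_Dt}, which relies crucially on $\nabla\cdot v_n=0$ and $v_n\cdot n|_\Gamma=0$, is indispensable.
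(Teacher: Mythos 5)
Your proposal is correct and follows essentially the same route as the paper: direct method with a minimizing sequence, weak compactness of the controls, Aubin--Lions compactness for the velocities, weak-$*$ compactness of the scalars in $L^{\infty}(0,T;L^{\infty}(\Omega))$, passage to the limit in the bilinear term by pairing strong convergence of $v_n$ with weak-$*$ convergence of $\theta_n$, and weak lower semicontinuity of the cost. The only (harmless) deviation is that you extract strong convergence of $\theta_n$ in $C([0,T];(H^{1}(\Omega))')$ from the uniform $\partial_t\theta_n$ bound via Aubin--Lions--Simon, whereas the paper settles for weak convergence of $\theta_m(T)$ in $(H^{1}(\Omega))'$ obtained from a test-function argument with $\psi(T)=1$; both suffice for the final lower semicontinuity step.
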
 
\begin{proof}
The proof follows the same approach as in \cite[Theorem 3.2]{hu2017boundary}. We provide the complete proof for the convenience of the reader. Since $J$ is bounded from below, we  may choose a minimizing sequence 
 $\{g_{m}\} \subset U_{\text{ad}}$ such that 
 \begin{align}
 \lim_{m\to \infty} J(g_{m})=\inf_{g\in U_{\text{ad}}}J(g). \label{inf_J}
 \end{align}
This also indicates that $\{g_{m}\}$ is uniformly bounded in  $U_{\text{ad}}$, and hence  there exists a weakly convergent subsequence, still denoted by  $\{g_{m}\}$, such that 
    \begin{align}
g_{m}\to g^{*} \quad \text{weakly in}\quad  L^{2}(0, T; V^{0}_{n}(\Gamma)), \ \ \text{as}\ \  m\to \infty.\label{1EST_g}
 \end{align}
With the help of \eqref{EST_v_L2} we can extract  a subsequence $\{v_m\}$ corresponding to $\{g_m\}$, such that 
  \begin{align*}
v_{m}\to v^{*} \quad &\text{weakly  in}\quad L^{2}(0, T; V^{1}_{n}(\Omega)),\\
\frac{\partial v_{m}}{\partial t}\to \frac{\partial v^{*}}{\partial t} \quad & \text{weakly  in}\quad L^{2}(0, T; (V^{1}_{n}(\Omega))'). 
 \end{align*}
 Thus
   \begin{align}
v_{m}\to v^{*} \quad &\text{strongly  in}\quad L^{2}(0, T; V^{0}_{n}(\Omega)) \label{3EST_cong_v}
 \end{align}
(cf.~\cite[Theorem~2.1]{Te}).
Let $\{\theta_m\}$ be the solutions corresponding to  $\{v_{m}\}$ with $\theta_{m}(0)=\theta_0\in L^{\infty}(\Omega)$. Then  $\theta_m\in C([0,T]; (H^1(\Omega))')$ and 
by \eqref{EST_theta_Lp} $\|\theta_{m}(t)\|_{L^{\infty}}=\|\theta_{0}\|_{L^\infty}$ for any $t\geq 0$. 
 Thus there exists   a subsequence,  still denoted by $\{\theta_{m}\}$, satisfying 
  \begin{align}
 \theta_{m}\to  \theta^* \quad \text{weak*  in}\  L^{\infty}(0, T; L^{\infty}(\Omega)). \label{2con_theta}
 \end{align}
Next we  show  that   $\theta^*$  is the solution  corresponding to $v^*$ by Definition \ref{def1}.  Recall  that $v_{m}$ and $\theta_{m}$ satisfy  
  \begin{align}
\left(  \frac{\partial \theta_{m}}{\partial t}, \phi \right)- (v_{m} \theta_{m}, \nabla \phi)&=0,  \quad \phi\in H^{1}(\Omega),\label{2weak_theta}\\
\theta_{m}(0)&=\theta_{0}. \nonumber
  \end{align}
Let $\psi\in C^{1}[0, T]$.  
For each $\phi\in H^1(\Omega)$, 
multiplying  \eqref{2weak_theta}  by $\psi$  and integrating the first term by parts yields  
  \begin{align}
(\theta_m(T),\phi \psi(T))-\int^{T}_{0}(   \theta_{m}, \phi \dot{\psi})\, dt- \int^{T}_{0}(v_{m} \theta_{m}, \nabla\phi \psi)\, dt=(\theta_0,\phi \psi(0)).
\label{3weak_theta}
\end{align}
With the help of  \eqref{2con_theta} and $\phi \dot{\psi}\in L^{1}(0, T; L^1(\Omega))$, it is easy to pass to the limit in the  second term of \eqref{3weak_theta}. Next  we show that  applying \eqref{3EST_cong_v}--\eqref{2con_theta} makes passing to the limit in
the nonlinear term $v_m  \theta_m\to v^*   \theta^*$ possible. 
In fact,  we have
   \begin{align}
& | \int^{T}_{0}\int_{\Omega} (v_{m}\theta_{m})\cdot\nabla (\phi \psi)\, dxdt - \int^{T}_{0}\int_{\Omega}(v^{*}\theta^{*})\cdot\nabla (\phi \psi)\, dxdt|\nonumber\\
 &\quad\leq | \int^{T}_{0}\int_{\Omega} (v_{m}\theta_{m})\cdot\nabla (\phi \psi)- (v^*\theta_{m})\cdot\nabla (\phi \psi)\, dxdt| \nonumber\\
 &\qquad+| \int^{T}_{0}\int_{\Omega} (v^*\theta_{m})\cdot\nabla (\phi \psi)- (v^*\theta^{*})\cdot\nabla (\phi \psi)\, dxdt|
 \nonumber\\
&\quad \leq \int^{T}_{0}\|v_{m}-v^*\|_{L^{2}} \| \theta_{m}\|_{L^{\infty}}\|\nabla \phi\|_{L^2} | \psi|\, dt
+|\int^{T}_{0}\int_{\Omega}(\theta_{m}-\theta^*)v^*\cdot \nabla  (\phi \psi)\, dx\,dt|\nonumber
\end{align}
where
\begin{align}
&\int^{T}_{0}\|v_{m}-v^*\|_{L^{2}} \| \theta_{m}\|_{L^{\infty}}\|\nabla \phi\|_{L^2} | \psi|\, dt \nonumber\\
&\qquad  \leq  \|v_{m}-v^*\|_{L^{2}(0, T; V^{0}_n(\Omega))} \| \theta_{0}\|_{L^{\infty}} \| \nabla \phi\|_{L^{2}}\|\psi\|_{L^{2}(0,T)} \to 0. \label{1EST_vtheta_limit}
\end{align}
Further note that $v^*\cdot \nabla ( \phi\psi)\in L^{1}(0, T; L^{1}(\Omega))$. In light of \eqref{2con_theta} we get 
\begin{align}
|\int^{T}_{0}\int_{\Omega}(\theta_{m}-\theta^*)v^*\cdot \nabla  ( \phi\psi)\, dx\,dt|\to 0.  \label{2EST_vtheta_limit}
  \end{align}  
From  \eqref{1EST_vtheta_limit}--\eqref{2EST_vtheta_limit} we have established   that 
\begin{align} 
v_{m}  \theta_{m}\to v^{*}\theta^*  \quad \text{weakly  in}\quad 
L^{2}(0, T;  (H^{1}(\Omega))'). \label{cong_vtheta}
\end{align}
Furthermore, as proven in  \cite[Theorem~3.2]{hu2017boundary} choosing $\psi\in C^{1}[0, T]$ such that  $\psi(0)=1$ and $\psi(T)=0$, we obtain   $\theta^*(0)=\theta_0$. Similarly, choosing  $\psi\in C^{1}[0, T]$ such that  $\psi(T)=1$  and letting $m\to \infty$ in \eqref{3weak_theta}, we get  
\[\theta_m(T) \to \theta^*(T)\quad \text{weakly in} \quad (H^1(\Omega))'.\]
Clearly, $\theta^*\in C([0, T]; (H^1(\Omega))')$  is the solution  corresponding to $v^*$ based on Definition \ref{def1}.

 Lastly, using the weakly lower semicontinuity property of norms yields 
\[\|g^*\|_{U_{ad}} \leq  \displaystyle \varliminf_{m\to \infty} \|g_m\|_{U_{ad}} \quad \text{and}\quad
\|\theta^*(T)\|_{(H^{1}(\Omega))'} \leq \varliminf_{m\to \infty} \|\theta_m(T)\|_{(H^{1}(\Omega))'}.
\]
In other words,
\[J(g^*)\leq   \varliminf_{m\to \infty} J(g_m)=\inf_{g\in U_{\text{ad}}}J(g),\]
which indicates that $g^*$ is an optimal solution to the problem~$(P)$.

\end{proof}

\section{Convergence of the Approximating System }
\label{App_sys}
 Let $( \theta, v)$ and $( \te, v_{\epsilon})$ be solutions of \eqref{EQ01}--\eqref{ini}  and   \eqref{App_EQ01}--\eqref{App_ini}, respectively, with the same initial condition $(v_0, \theta_{0})$ and boundary condition $g$.  The well-posedness and regularity  of the approximating system follow the results from  parabolic boundary value problems 
(cf.~\cite{brezis2010functional}) and the details can be found in   \cite[Theorem~1]{barbu2016optimal}.  
\begin{thm}
\label{wellposed_te}
For given $\epsilon>0$, $\theta_0\in L^{\infty}(\Omega)$ and  $v_{\epsilon}\in L^2(0, T; V^0_n(\Omega))$, there exists a unique weak solution 
$\te\in C([0, T ]; L^2(\Omega))\cap L^2(0, T;  H^1(\Omega))$ to
\eqref{App_EQ01}, \eqref{App_BC_EQ01} and \eqref{App_ini}. Moreover, 
\begin{align}
&\|\te\|_{L^{\infty}(0, T; L^{\infty}(\Omega))}+\sqrt{\epsilon}\|\te\|_{L^{2}(0, T; H^{1}(\Omega))}+\|\frac{d \te}{dt}\|_{L^{2}(0, T; (H^{1}(\Omega))')}\nonumber\\
&\qquad +\|v_{\epsilon}\cdot \nabla \te\|_{L^{2}(0, T; (H^{1}(\Omega))')}\leq C(\theta_0, v_{\epsilon}). \label{EST_te}
\end{align}
\end{thm}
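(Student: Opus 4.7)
The plan is to prove this essentially as a standard parabolic estimate, split into four parts: existence/uniqueness of the weak solution, the $L^\infty$ bound (the delicate part), the parabolic energy estimate giving the $\sqrt{\epsilon}$ term, and a dual estimate for $\partial_t \theta_\epsilon$. Since $v_\epsilon \in L^2(0,T; V^0_n(\Omega))$ is merely an $L^2$-in-time coefficient, existence of a weak solution in $C([0,T]; L^2) \cap L^2(0,T; H^1)$ with the Neumann condition $\epsilon \partial_n \theta_\epsilon|_\Gamma = 0$ follows from standard linear parabolic theory; I would cite \cite{brezis2010functional} and \cite{barbu2016optimal} and implement it either by a Galerkin basis of eigenfunctions of $\mathcal{A}$ or by a Faedo-Galerkin scheme. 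Uniqueness is immediate from linearity: the difference $w$ of two weak solutions with the same data satisfies the same equation with zero initial condition, and the energy identity below forces $w \equiv 0$.

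For the $L^\infty$ bound, I would use the structural fact $\nabla \cdot v_\epsilon = 0$ with $v_\epsilon \cdot n|_\Gamma = 0$: testing with $(\theta_\epsilon - M)_+$ for $M = \|\theta_0\|_{L^\infty}$ (Stampacchia truncation), the convection term vanishes because
\begin{align*}
\int_\Omega (v_\epsilon \cdot \nabla \theta_\epsilon)(\theta_\epsilon - M)_+ \, dx = \tfrac{1}{2}\int_\Omega v_\epsilon \cdot \nabla((\theta_\epsilon - M)_+^2)\, dx = 0,
\end{align*}
using $\nabla \cdot v_\epsilon = 0$ and $v_\epsilon \cdot n|_\Gamma = 0$. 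The remaining diffusive term is nonnegative, so $\|(\theta_\epsilon - M)_+\|_{L^2}$ is nonincreasing in $t$ and vanishes initially; an analogous argument for $(\theta_\epsilon + M)_-$ yields $\|\theta_\epsilon(t)\|_{L^\infty} \leq \|\theta_0\|_{L^\infty}$. I expect this step to be the main obstacle at the level of rigor, since Stampacchia's truncation must be justified on weak solutions with only $L^2$-in-time velocity; however, the computation is standard.

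For the $\sqrt{\epsilon}$ estimate, test the equation with $\theta_\epsilon$ itself. The convection term again vanishes by the divergence-free / no-penetration argument, giving the energy identity
\begin{align*}
\tfrac{1}{2}\tfrac{d}{dt}\|\theta_\epsilon\|_{L^2}^2 + \epsilon \|\nabla \theta_\epsilon\|_{L^2}^2 = 0,
\end{align*}
so that $\sqrt{\epsilon}\|\theta_\epsilon\|_{L^2(0,T; H^1)} \leq C \|\theta_0\|_{L^2}$, where $C$ absorbs a lower-order term from $\|\theta_\epsilon\|_{L^2(0,T; L^2)}$ controlled by $\|\theta_0\|_{L^2} \sqrt{T}$.

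For the two dual norm estimates, I would mimic the computation already performed between \eqref{0EST_theta_Dt} and \eqref{1EST_theta_Dt}. For any $\phi \in H^1(\Omega)$, after integrating by parts using $\nabla \cdot v_\epsilon = 0$ and $v_\epsilon \cdot n|_\Gamma = 0$,
\begin{align*}
\langle v_\epsilon \cdot \nabla \theta_\epsilon, \phi \rangle = -\int_\Omega \theta_\epsilon \, v_\epsilon \cdot \nabla \phi \, dx,
\end{align*}
so $\|v_\epsilon \cdot \nabla \theta_\epsilon\|_{(H^1)'} \leq \|\theta_\epsilon\|_{L^\infty}\|v_\epsilon\|_{L^2}$, and integrating in time yields the last piece of \eqref{EST_te}. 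For $\partial_t \theta_\epsilon$, rewrite the equation as $\partial_t \theta_\epsilon = \epsilon \Delta \theta_\epsilon - v_\epsilon \cdot \nabla \theta_\epsilon$; test against $\phi \in H^1(\Omega)$, use the Neumann condition to get $\epsilon \langle \Delta \theta_\epsilon, \phi\rangle = -\epsilon \int_\Omega \nabla \theta_\epsilon \cdot \nabla \phi \, dx$, and combine with the convection bound above to obtain
\begin{align*}
\|\partial_t \theta_\epsilon\|_{(H^1)'} \leq \epsilon \|\nabla \theta_\epsilon\|_{L^2} + \|\theta_0\|_{L^\infty}\|v_\epsilon\|_{L^2}.
\end{align*}
Squaring, integrating in time, and using the previous $\sqrt{\epsilon}$ estimate (which controls $\epsilon^2 \|\nabla \theta_\epsilon\|_{L^2}^2 \leq \epsilon \cdot \epsilon \|\nabla \theta_\epsilon\|_{L^2}^2$) gives the $\partial_t$ bound uniformly in $\epsilon \in (0,1]$. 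Assembling all four pieces yields \eqref{EST_te}.
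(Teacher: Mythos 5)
Your proposal is correct and is essentially the argument the paper invokes: the paper gives no proof of Theorem \ref{wellposed_te}, deferring instead to standard parabolic theory via \cite{brezis2010functional} and \cite[Theorem~1]{barbu2016optimal}, and your outline (Galerkin existence, Stampacchia truncation for the $L^\infty$ bound using $\nabla\cdot v_\epsilon=0$ and $v_\epsilon\cdot n|_\Gamma=0$, the energy identity for the $\sqrt{\epsilon}$ term, and duality for $\partial_t\theta_\epsilon$ and $v_\epsilon\cdot\nabla\theta_\epsilon$) is exactly the content of that standard theory, consistent with the computation the paper itself performs in \eqref{0EST_theta_Dt}--\eqref{1EST_theta_Dt} for the $\epsilon=0$ case.
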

To show  the convergence of $\theta_\epsilon$ to $\theta$ as $\epsilon\to 0$, we shall need  an {\it a priori} estimate on $\theta$, that is,
 \begin{align}
 \int^T_{0}\|\nabla \theta\|^2_{L^2}\, dt<\infty.\label{EST_thetaH1L2}
 \end{align}
Note  that applying
$H^1$-estimate to scalar equation \eqref{EQ01} and the Gronwall inequality, we obtain  
\begin{align}
 \sup_{t\in[0, T]}\Vert \nabla \theta\Vert_{L^{2}}\leq C \Vert \nabla \theta_{0}\Vert_{L^{2}} e^{ \int^{T}_{0}\|\nabla v\|_{L^{\infty}}\, dt}.
 \label{1EST_te_H1}
\end{align}
The detailed proof can be found in   \cite[Lemma 2.1]{hu2017boundary} and the references therein.  For $\theta_{0}\in H^{1}(\Omega)$, if 
\begin{align}
 \int^{T}_{0}\|\nabla v\|_{L^{\infty}}\, dt <\infty, \label{EST_vH1L1}
\end{align}
then \eqref{1EST_te_H1} holds, and hence  \eqref{EST_thetaH1L2} follows. 
It remains to identify the initial and boundary data of the velocity field such that  \eqref{EST_vH1L1} holds.
Using Agmon  inequality 
(cf.~\cite[(2.21), p.\,11]{temam1995navier})
\begin{align}
\|\nabla v\|_{L^{\infty}} \leq C\|v\|_{H^{1+d/2+\eta}},  \quad d=2,3, \quad  \forall\eta>0,  \label{EST_v_infty}
\end{align}
and the variation of parameters formula 
\begin{align}
   v(t)=e^{-At}(v_{0}-Ng_0)+Ng(t)-\int^{t}_{0} e^{-A(t-\tau)}N\dot{g}(\tau)\, d\tau,  \label{var_form_diff}
    \end{align}
 we have  shown  in \cite{hu2017boundary}  that if 
\[v_0\in V^{d/2-1+2\eta}_{n}(\Omega) \quad \text{and}\quad g\in L^{2}(0, T; V^{d/2-1/2+\eta}_{n}(\Gamma))\cap H^1(0, T; V^0_n(\Gamma)),
 \quad d=2,3,\]
then  \eqref{EST_vH1L1} follows. 
In fact, the first order  time derivative on $g$ can be relaxed, which will be  proven in the following lemma. 

\begin{lemma}\label{App_EST_v}
Let 
\begin{align*}
 \mathbb{S}= L^{2}(0, T; V^{d/2-1/2+2\eta}_{n}(\Gamma))\cap  H^{d/4-1/4+\eta}(0, T; V^{0}_n(\Gamma)),\quad d=2,3,
\end{align*}
equipped with the norm
\begin{align*}
\|g\|_{\mathbb{S}}=\|g\|_{L^{2}(0, T; V^{d/2-1/2+2\eta}_{n}(\Gamma))}
 +\|g\|_{H^{d/4-1/4+\eta}(0, T; V^{0}_n(\Gamma))}, 
 \end{align*}
where $0<\eta<1/4$ for $d=2$ and $0<\eta<1/8$ for $d=3$.  If $v_0\in V^{d/2-1+2\eta}_{n}(\Omega)$   and  $g\in \mathbb{S}$,
then \eqref{EST_vH1L1} holds. 
\end{lemma}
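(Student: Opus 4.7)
The plan is to combine Agmon's inequality with the smoothing of the analytic semigroup $e^{-At}$ on the initial data and the lifting regularity of the operator $L$ on the boundary data. By Agmon's inequality \eqref{EST_v_infty}, it suffices to show that $v\in L^{1}(0,T; H^{1+d/2+\eta}(\Omega))$, since then
\[
\int_{0}^{T}\|\nabla v\|_{L^{\infty}}\,dt \leq C\int_{0}^{T}\|v\|_{H^{1+d/2+\eta}}\,dt <\infty.
\]
I decompose $v = v_{1}+v_{2}$ using the variation of parameters formula \eqref{var_form}, with $v_{1}(t)=e^{-At}v_{0}$ and $v_{2}=Lg$, and treat each piece separately.

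For $v_{1}$, set $\beta=d/4-1/2+\eta$ and $\alpha=(1+d/2+\eta)/2$, so that $2\beta=d/2-1+2\eta$, $2\alpha=1+d/2+\eta$, and $\alpha-\beta = 1-\eta/2$. Since $\beta<3/4$ in the stated range of $\eta$, the identification \eqref{1frac_D_Aw} places $v_{0}\in \mathscr{D}(A^{\beta})$. Then \eqref{EST_semigroup} gives
\[
\|A^{\alpha}v_{1}(t)\|_{L^{2}} \leq M\,t^{-(1-\eta/2)}e^{-\omega t}\|v_{0}\|_{\mathscr{D}(A^{\beta})},
\]
and the exponent $1-\eta/2<1$ makes the right-hand side integrable on $(0,T)$; the continuous inclusion $\mathscr{D}(A^{\alpha})\hookrightarrow H^{2\alpha}(\Omega)=H^{1+d/2+\eta}(\Omega)$ then yields $v_{1}\in L^{1}(0,T; H^{1+d/2+\eta})$.

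For $v_{2}=Lg$, the natural choice is $s=d/4-1/4+\eta$, so that $2s=d/2-1/2+2\eta$ matches the spatial index of $\mathbb{S}$, $s$ matches its temporal index, and the output exponent $2s+3/2=1+d/2+2\eta$ strictly exceeds what Agmon requires. For $d=2$ the hypothesis $\eta<1/4$ gives $s<1/2$, so \eqref{5L_oper} applies directly, yielding $v_{2}\in L^{2}(0,T; V^{1+d/2+2\eta}_{n}(\Omega))\hookrightarrow L^{2}(0,T; H^{1+d/2+\eta}(\Omega))$, and Cauchy--Schwarz in time then converts this into the required $L^{1}$ bound.

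The main obstacle is the case $d=3$, where $s=1/2+\eta\in(1/2,5/8)$ falls in the range in which the stated form of \eqref{5L_oper} requires $g(0)=0$. I would handle this by an initial-trace decomposition: the anisotropic trace theorem applied to $g\in L^{2}(0,T; V^{1+2\eta}_{n}(\Gamma))\cap H^{1/2+\eta}(0,T; V^{0}_{n}(\Gamma))$ puts $g(0)$ in an interpolation space between $V^{1+2\eta}_{n}(\Gamma)$ and $V^{0}_{n}(\Gamma)$, after which $g$ splits as $g=g_{0}+\tilde g$ with $\tilde g(0)=0$ (so that \eqref{5L_oper} applies to $\tilde g$) and $g_{0}$ a smooth lifting of $g(0)$ whose contribution $Lg_{0}$ is controlled directly from the mapping $N\colon V^{r}(\Gamma)\to V^{r+3/2}(\Omega)$ together with semigroup decay. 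Balancing the spatial regularity of the lifting against the temporal regularity of the remainder is the delicate step, and it is precisely what forces the narrower constraint $\eta<1/8$ in three dimensions. Combining the estimates for $v_{1}$ and $v_{2}$ and then applying Agmon delivers \eqref{EST_vH1L1}.
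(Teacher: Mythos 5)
Your treatment of the initial-data term and of the case $d=2$ coincides with the paper's proof: the same splitting $v=e^{-At}v_{0}+Lg$, the same exponent bookkeeping $\alpha-\beta=1-\eta/2$ giving an integrable singularity in \eqref{EST_semigroup}, and the same application of \eqref{5L_oper} with $s=1/4+\eta<1/2$ followed by Cauchy--Schwarz in time. That part is complete and correct.

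For $d=3$, however, you correctly identify the obstruction ($s=1/2+\eta>1/2$ forces \eqref{5L_oper} to see $g(0)$) but you stop at a plan: the splitting $g=g_{0}+\tilde g$, the anisotropic trace theorem, and the ``balancing'' you call the delicate step are announced rather than carried out, so the decisive estimate is missing. The paper closes this gap more directly, and without any interpolation-space machinery on $g(0)$: it integrates by parts in \eqref{L_oper} to get
\begin{align*}
(Lg)(t)=Ng(t)-e^{-At}Ng(0)-\int^{t}_{0}e^{-A(t-\tau)}N\dot g(\tau)\,d\tau,
\end{align*}
which is exactly your decomposition with $g_{0}$ taken constant in time, and then estimates the three terms separately: $Ng\in L^{2}(0,T;V^{5/2+\eta}_{n}(\Omega))$ from the spatial regularity of $g$; the term $e^{-At}Ng(0)$ needs only $g(0)\in L^{2}(\Gamma)$ (which follows from $g\in C([0,T];V^{2\eta}_{n}(\Gamma))$) together with the same $t^{-(1-\eta/2)}$ semigroup smoothing you already used for $v_{0}$; and the convolution term uses $\dot g\in L^{2}(0,T;V^{-1+2\eta}_{n}(\Gamma))$, hence $N\dot g\in L^{2}(0,T;\mathscr{D}(A^{1/4+\eta}))$ --- this is precisely where $\eta<1/8$ enters --- so that \eqref{convolu} lands it in $L^{2}(0,T;\mathscr{D}(A^{5/4+\eta/2}))\subset L^{2}(0,T;V^{5/2+\eta}_{n}(\Omega))$. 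If you replace your sketched ``balancing'' step by these three explicit estimates, your argument becomes a complete proof essentially identical to the paper's.
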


\begin{proof}
We first consider $d=2$ and let $0<\eta<1/4$.  In this case,  $g\in  \mathbb{S}= L^{2}(0, T; V^{1/2+2\eta}_{n}(\Gamma))\cap  H^{1/4+\eta}(0, T; V^{0}_n(\Gamma))$, where $1/4+\eta<1/2$. According to  \eqref{var_form}, \eqref{5L_oper}, and  \eqref{EST_v_infty}, we have
 \begin{align}
&\int^{T}_{0}\|\nabla v\|_{L^{\infty}}\, d\tau 
\leq C \int^{T}_{0}\|v\|_{H^{1+d/2+\eta}}\, d\tau\nonumber\\
&\quad   \leq C\left(\int^T_{0}\|e^{-At}v_{0}\|_{H^{1+d/2+\eta}}\, dt+  \int^T_0\|Lg\|_{H^{1+d/2+\eta}}\, dt\right)\nonumber\\
&\quad \leq C\left( \int^{T}_{0}\|A^{1-\eta/2}e^{-At} A^{-1+\eta/2} A^{1/2+d/4+\eta/2}v_{0}\|_{L^2}\,dt
 +\sqrt{T} \|Lg\|_{L^2(0, T; H^{1+d/2+\eta}(\Omega))}\right) \label{EST_gradV_infty1}\\
&\quad \leq  C\left(\int^T_{0}\frac{e^{-\omega t}}{t^{1-\eta/2}}\,dt \|v_{0}\|_{H^{d/2-1+2\eta}}
+\sqrt{T}\|g\|_{\mathbb{S}}\right) \label{EST_gradV_infty2}\\
&\quad \leq  C \left(\|v_{0}\|_{H^{d/2-1+2\eta}}+ \sqrt{T}\|g\|_{\mathbb{S}}\right).  \label{EST_gradV_infty3}
\end{align}
From \eqref{EST_gradV_infty1}  to \eqref{EST_gradV_infty2} we used  \eqref{EST_semigroup}--\eqref{5L_oper} 
and Young's inequality for convolution. 

 For $d=3$, we shall need $g\in L^{2}(0, T; V^{1+2\eta}_{n}(\Gamma))\cap  H^{1/2+\eta}(0, T; V^{0}_n(\Gamma))$, which indicates that $g\in C([0, T]; V^{2\eta}_n(\Gamma))$, and hence $g(0)$ comes into play in  deriving  the regularity of  $L$ when  $s\geq 1/2$ in \eqref{5L_oper}. In fact, applying integration by parts gives
\begin{align}
   (Lg)(t)=Ng(t)-e^{-At}Ng(0)-\int^{t}_{0} e^{-A(t-\tau)}N\dot{g}(\tau)\, d\tau. \label{Int_L}
   \end{align}
   However, $g(0)$ does not interfere  $Lg\in L^1(0, T; V^{5/2+\eta}_n(\Omega))$.  First of all, it is clear that $Ng\in L^2(0, T; V^{5/2+\eta}_n(\Omega))\subset L^1(0, T; V^{5/2+\eta}_n(\Omega))$ for $T<\infty$. Moreover,  since  $\dot{g}\in L^{2}(0, T; V^{-1+2\eta}_n(\Gamma))$,
we have $N\dot g\in L^{2}(0, T; V^{1/2+2\eta}_n(\Omega))=L^{2}(0, T; D(A^{1/4+\eta}))$ for $0<\eta<1/8$.
Thus  
\[
  \int^{t}_{0} e^{-A(t-\tau)}N\dot{g}(\tau)\, d\tau\in  L^{2}(0, T; D(A^{5/4+\eta/2}))\subset L^{2}(0, T; V^{5/2+\eta}_n(\Omega))).
\]
Lastly, using  the same estimate  as for $v_0$  from \eqref{EST_gradV_infty1} to \eqref{EST_gradV_infty2}, we get for    
$ 0<\eta  <1/8$,
\[
  \int^T_0\| e^{-At}Ng(0)\|_{H^{5/2+\eta}}\, dt \leq C\| Ng(0)\|_{H^{1/2+2\eta}}\leq C\| g(0)\|_{L^2}\leq C\|g\|_{\mathbb{S}}. 
\]
Therefore, \eqref{EST_gradV_infty3} also holds for $d=3$. This completes the proof.
\end{proof}
In the rest of our discussion, $\eta$  always satisfies the assumptions in Lemma \ref{App_EST_v}. The following Theorem establishes the convergence of $\te$ to $\theta$ as $\epsilon\to0$.   To this end, we shall need $\theta_0\in H^1(\Omega)$.

\begin{thm} \label{cog_theta}
Assume that  $\theta_0\in H^{1}(\Omega)$, $v_0\in V^{d/2-1+2\eta}_{n}(\Omega), d=2,3$,  and $g\in \mathbb{S}$. We have
\begin{align}
&\sup_{t\in [0, T]} \| \te-\theta\|_{L^2}\leq  C( \theta_0, v_0, g, T)\epsilon^{1/2} \label{Diff_te}
\end{align}
and 
\begin{align}
\|\frac{\partial \theta_{\epsilon}}{\partial n}|_{\Gamma}- \frac{\partial \theta}{\partial n}|_{\Gamma}\|_{L^2(0, T; H^{-1}(\Gamma))}
\leq  C( \theta_0, v_0, g, T)\epsilon^{1/4}. \label{convg_trace}
\end{align}
\end{thm}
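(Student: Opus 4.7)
The plan is to establish \eqref{Diff_te} by a direct $L^2$-energy argument for the difference $w := \theta_\epsilon - \theta$, and then to deduce \eqref{convg_trace} from a Sobolev interpolation estimate for $w$ together with a duality pairing on $\Gamma$.

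Since $v_\epsilon = v$ by \eqref{v_epsilon}, subtracting \eqref{EQ01} from \eqref{App_EQ01} gives
\[
\frac{\partial w}{\partial t} - \epsilon \Delta \theta_\epsilon + v \cdot \nabla w = 0, \qquad w(0) = 0,
\]
together with $\partial \theta_\epsilon/\partial n|_\Gamma = 0$ from \eqref{App_BC_EQ01}. I would test with $w$: the convection term $(v\cdot\nabla w, w)$ vanishes because $\nabla\cdot v = 0$ and $v\cdot n|_\Gamma = 0$, and integration by parts combined with the Neumann condition turns the diffusion contribution into $\epsilon(\nabla \theta_\epsilon, \nabla w) = \epsilon \|\nabla w\|^2_{L^2} + \epsilon(\nabla \theta, \nabla w)$. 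Young's inequality then yields
\[
\frac{d}{dt}\|w\|^2_{L^2} + \epsilon \|\nabla w\|^2_{L^2} \leq \epsilon \|\nabla \theta\|^2_{L^2}.
\]
Under the hypotheses $\theta_0 \in H^1(\Omega)$, $v_0 \in V^{d/2-1+2\eta}_n(\Omega)$, and $g \in \mathbb{S}$, Lemma~\ref{App_EST_v} together with \eqref{1EST_te_H1} supplies the $\epsilon$-independent bound $\|\nabla \theta\|_{L^\infty(0,T;L^2)} \leq C(\theta_0, v_0, g, T)$. Integrating from $0$ to any $t \in [0,T]$ with $w(0)=0$ then produces
\[
\sup_{t \in [0,T]} \|w(t)\|^2_{L^2} + \epsilon \int_0^T \|\nabla w\|^2_{L^2}\, dt \leq C \epsilon,
\]
which is \eqref{Diff_te}. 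A useful by-product is the uniform bound $\int_0^T \|\nabla w\|^2_{L^2}\, dt \leq C$, placing $w$ in $L^2(0,T; H^1(\Omega))$ independently of $\epsilon$.

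For \eqref{convg_trace}, space interpolation between the two estimates above gives
\[
\|w\|_{L^2(0,T; H^{1/2}(\Omega))} \leq C \|w\|^{1/2}_{L^2(0,T;L^2)} \|w\|^{1/2}_{L^2(0,T;H^1)} \leq C \epsilon^{1/4}.
\]
For an arbitrary $\phi \in L^2(0,T; H^1(\Gamma))$ I would choose a continuous lift $\Phi \in L^2(0,T; H^{3/2}(\Omega))$ with $\|\Phi\|_{L^2(H^{3/2})} \leq C \|\phi\|_{L^2(H^1(\Gamma))}$, and then use Green's identity together with $\partial \theta_\epsilon/\partial n|_\Gamma = 0$ and the PDE identity $\epsilon \Delta \theta_\epsilon = \partial_t w + v \cdot \nabla w$ to rewrite $\int_0^T \langle \partial \theta_\epsilon/\partial n - \partial \theta/\partial n, \phi \rangle_\Gamma\, dt$ as a duality pairing dominated by $\|w\|_{L^2(H^{1/2})}\|\Phi\|_{L^2(H^{3/2})}$. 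Taking the supremum over $\phi$ of unit norm then delivers \eqref{convg_trace}.

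The hard part will be this last step. Because the sharp rate only places $w$ in $H^{1/2}(\Omega)$, its normal trace is not classical and $\Delta w = \Delta \theta_\epsilon - \Delta \theta$ fails to be $L^2$-integrable. The technical balancing act is to combine Green's identity with the boundary condition \eqref{App_BC_EQ01} and the equation structure $\epsilon \Delta \theta_\epsilon = \partial_t w + v \cdot \nabla w$ so that the $\Delta w$ contribution is absorbed into a term scaling like $\|w\|_{H^{1/2}}$ rather than $\|w\|_{H^1}$; otherwise one would only recover the trivial $O(1)$ rate. It is this absorption, driven by the extra factor of $\epsilon$ inside $\epsilon\Delta\theta_\epsilon$, that earns the $\epsilon^{1/4}$ exponent from the interpolation.
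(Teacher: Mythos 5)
Your proof of \eqref{Diff_te} is the paper's proof: the same difference equation for $w=\theta_\epsilon-\theta$ (the paper calls it $\Theta_\epsilon$), the same vanishing of the convection term, the same handling of the diffusion term via the Neumann condition and Young's inequality to reach $\frac{d}{dt}\|w\|^2_{L^2}+\epsilon\|\nabla w\|^2_{L^2}\le \epsilon\|\nabla\theta\|^2_{L^2}$, and the same appeal to Lemma~\ref{App_EST_v} together with \eqref{1EST_te_H1} to bound $\sup_{t\in[0,T]}\|\nabla\theta\|_{L^2}$ independently of $\epsilon$. Your by-product $\int_0^T\|\nabla w\|^2_{L^2}\,dt\le C$ and the interpolation $\|w\|_{L^2(0,T;H^{1/2}(\Omega))}\le C\epsilon^{1/4}$ are likewise exactly \eqref{H1_Theta} and the opening of the paper's argument for \eqref{convg_trace}. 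Where you diverge is the final step: the paper closes \eqref{convg_trace} in one line by the trace theorem, $\|\frac{\partial \Theta_{\epsilon}}{\partial n}|_{\Gamma}\|_{L^2(0,T;H^{-1}(\Gamma))}\le C\|\Theta_\epsilon\|_{L^2(0,T;H^{1/2}(\Omega))}$ (the formal loss of $3/2$ derivatives for the normal-derivative trace), and never touches the equation again. You instead sketch a lifting/Green's-identity duality argument and explicitly defer its ``hard part,'' so as written your proof of \eqref{convg_trace} is not complete. Your worry that the normal-derivative trace is not classically defined at $H^{1/2}(\Omega)$ regularity is legitimate---the paper's one-liner is doing real work there---but note that your proposed repair reintroduces $\Delta\theta_\epsilon=\epsilon^{-1}(\partial_t w+v\cdot\nabla w)$, whose factor $\epsilon^{-1}$ you have not shown to be compensated, so it is not clear your route preserves the $\epsilon^{1/4}$ rate. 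If you simply invoke the trace estimate as the paper does, your argument coincides with the paper's and \eqref{convg_trace} follows immediately from your interpolation bound.
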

\begin{proof}
Let $\Theta_{\epsilon}=\te-\theta$ and recall $v_{\epsilon}=v$ for given $v_0$ and $g$.
Then based on \eqref{EQ01}--\eqref{ini}  and   \eqref{App_EQ01}--\eqref{App_ini},  $\Theta_{\epsilon}$ satisfies 
\begin{align}
   & \frac{\partial \Theta_{\epsilon}}{\partial t}-\epsilon \Delta \Theta_{\epsilon}+ v\cdot \nabla \Theta_{\epsilon}=\Delta\theta ,  \label{Diff_EQ01}
  \end{align}
with the boundary condition 
   \begin{align}
    \epsilon \frac{\partial\Theta_{\epsilon}}{\partial n}|_{\Gamma}= - \epsilon \frac{\partial \theta}{\partial n}|_{\Gamma}  \label{Diff_EQ01_BC}
      \end{align}
      and  the initial condition
      \begin{align}
      \Theta_{\epsilon}(0)=0. \label{Diff_EQ01_ini}
      \end{align}
  Taking the inner product of \eqref{Diff_EQ01} with $\Theta$ and using \eqref{Diff_EQ01_BC}, we get
  \begin{align}
    &\frac{1}{2}\frac{d \|\Theta_{\epsilon}\|^2_{L^2}}{d t}+\epsilon \|\nabla \Theta_{\epsilon}\|^2_{L^2} 
   =\langle\epsilon \frac{\partial \Theta_{\epsilon}}{\partial n}, \Theta_{\epsilon} \rangle
 +  \langle\epsilon \frac{\partial \theta}{\partial n}, \Theta_{\epsilon}\rangle
   -\epsilon( \nabla \theta, \nabla {\Theta_{\epsilon}}), \nonumber \\
  &\qquad \leq  \epsilon \|\nabla \theta\|_{L^2} \|\nabla \Theta_{\epsilon}\|_{L^2}
 \leq \frac{\epsilon}{2} \|\nabla \theta\|^2_{L^2}+\frac{\epsilon}{2} \|\nabla \Theta_{\epsilon}\|^2_{L^2},  \label{L2_Theta}
  \end{align}
which yields
\begin{align}
    \frac{d \|\Theta_{\epsilon}\|^2_{L^2}}{d t}  +   \epsilon \|\nabla \Theta_{\epsilon}\|^2_{L^2} 
\leq \epsilon \|  \nabla \theta\|^2_{L^{2}}. \label{L2_Theta_aa}
  \end{align}
By the Gronwall inequality and the  initial condition \eqref{Diff_EQ01_ini}, we have
\begin{align}
    \|\Theta_{\epsilon}\|^2_{L^2}
     &\leq  \epsilon \int^t_{0}e^{-\epsilon(t-\tau)}\|\nabla   \theta\|^2_{L^{2}}\,d\tau
     \leq  \epsilon \int^T_{0}\| \nabla  \theta\|^2_{L^{2}}\,d\tau\nonumber\\
     &\leq  \epsilon  T\sup_{t\in [0, T]}\| \nabla  \theta\|^2_{L^{2}}\leq  C(\theta_0, v_0, g, T) \epsilon,  \label{2L2_Theta}
  \end{align}
where we used \eqref{1EST_te_H1} and Lemma \ref{App_EST_v} for deriving  the last inequality. 
Moreover, from \eqref{L2_Theta_aa},
\begin{align}
\int^T_{0} \|\nabla \Theta_{\epsilon}\|_{L^2}\, d\tau\leq  \int^T_{0}\| \nabla  \theta\|^2_{L^{2}}\,d\tau\leq C(\theta_0, v_0, g, T).
\label{H1_Theta}
\end{align}

To establish \eqref{convg_trace}, using  the trace theorem together with \eqref{2L2_Theta}--\eqref{H1_Theta} we obtain 
\begin{align*}
& \|\frac{\partial \Theta_{\epsilon}}{\partial n}|_{\Gamma}\|_{L^2(0, T; H^{-1}(\Gamma))}
\leq C \|\Theta_{\epsilon}\|_{L^2(0, T; H^{1/2}(\Omega))}\\
&\qquad \leq  C( \int^T_0\|\Theta_{\epsilon}\|_{L^{2}} \|\Theta_{\epsilon}\|_{H^{1}}\, dt)^{1/2}\\
&\qquad\leq C(\sup_{t\in[0, T]} \|\Theta_{\epsilon}\|_{L^{2}})^{1/2}  (\int^T_0\|\Theta_{\epsilon}\|_{H^{1}}\, dt)^{1/2}
\leq  C( \theta_0, v_0, g, T)\epsilon^{1/4}.
\end{align*}
This completes the proof. 
\end{proof}

Note that since there is no boundary condition imposed on  $\theta$, $\frac{\partial \theta}{\partial n}|_{\Gamma}$ is not defined.
In addition, under the assumptions of Theorem \ref{cog_theta} we can further verify that 
$ \frac{\partial \theta}{\partial t}\in L^{2}(0, T; L^2(\Omega)) $.   Again by  the Aubin--Lions--Simon Lemma,  we have 
\begin{align*}
\theta\in   L^{\infty}(0, T; H^1(\Omega))\cap  H^1(0, T; L^2(\Omega))\subset  C([0, T]; H^{1/2}(\Omega)). 
\end{align*}
Combining this with Theorem \ref{wellposed_te} and \eqref{Diff_te} yields  
\begin{align}
 \| \te(T)-\theta(T)\|_{L^2}\leq  C( \theta_0, v_0, g, T)\epsilon^{1/2}. \label{Diff_teT}
\end{align}
\section{Existence of an Optimal Solution to  $(P_{\epsilon})$ and its  Conditions of Optimality}
\label{opt_appr_sys}
Note that the existence of an optimal controller to  the problem~$(P)$ is independent of $\epsilon$. 
With the help of  \eqref{v_epsilon} and  \eqref{EST_te}, the existence of  an optimal controller to the problem~$(P_{\epsilon})$ follows immediately.

  \begin{thm}\label{App_existence_opt}
 Assume that $\theta_{0}\in L^{\infty}(\Omega)$ and $v_{0}\in V^{0}_n(\Omega)$.  There exists an optimal solution  $g^*_{\epsilon}\in U_{\epsilon_\text{ad}}$  to the problem~$(P_{\epsilon})$.
 \end{thm}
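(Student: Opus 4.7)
The plan is to mirror the direct-method argument from Theorem~\ref{existence_opt}, but exploit the extra regularity supplied by the viscosity term $\epsilon\Delta\te$ and the a~priori bound \eqref{EST_te}. Since the cost $J_\epsilon$ is bounded below by $0$, pick a minimizing sequence $\{g_{\epsilon,m}\}\subset U_{\epsilon_{\text{ad}}}=L^{2}(0,T;V^0_n(\Gamma))$. The bound $J_\epsilon(g_{\epsilon,m})\le J_\epsilon(0)+1$ forces $\{g_{\epsilon,m}\}$ to be uniformly bounded in $L^{2}(0,T;V^0_n(\Gamma))$, so by reflexivity extract $g_{\epsilon,m}\rightharpoonup g^{*}_\epsilon$ weakly. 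Because $v_\epsilon=v$ by \eqref{v_epsilon}, the associated velocities $v_{\epsilon,m}$ satisfy exactly the bound \eqref{EST_v_L2}; hence along a further subsequence $v_{\epsilon,m}\rightharpoonup v^{*}_\epsilon$ weakly in $L^{2}(0,T;V^1_n(\Omega))$, $\partial_t v_{\epsilon,m}\rightharpoonup \partial_t v^{*}_\epsilon$ weakly in $L^{2}(0,T;(V^1_n)')$, and by Aubin--Lions--Simon $v_{\epsilon,m}\to v^{*}_\epsilon$ strongly in $L^{2}(0,T;V^0_n(\Omega))$.

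Next, invoke Theorem~\ref{wellposed_te}: each $\theta_{\epsilon,m}$ satisfies the uniform bound \eqref{EST_te}, so (up to a subsequence) $\theta_{\epsilon,m}\rightharpoonup \theta^{*}_\epsilon$ weakly in $L^{2}(0,T;H^1(\Omega))$, weak-$*$ in $L^\infty(0,T;L^\infty(\Omega))$, and $\partial_t\theta_{\epsilon,m}\rightharpoonup \partial_t\theta^{*}_\epsilon$ weakly in $L^{2}(0,T;(H^1(\Omega))')$. Another application of Aubin--Lions--Simon (using $H^1\subset\subset L^2\subset (H^1)'$) yields strong convergence $\theta_{\epsilon,m}\to \theta^{*}_\epsilon$ in $L^{2}(0,T;L^2(\Omega))$. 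Now pass to the limit in the weak formulation
\[
\bigl(\partial_t\theta_{\epsilon,m},\phi\bigr)+\epsilon\bigl(\nabla\theta_{\epsilon,m},\nabla\phi\bigr)-\bigl(v_{\epsilon,m}\theta_{\epsilon,m},\nabla\phi\bigr)=0,\quad \phi\in H^{1}(\Omega),
\]
tested against $\phi\,\psi(t)$ with $\psi\in C^{1}[0,T]$, just as in \eqref{3weak_theta}. The linear terms pass by weak (resp.~weak-$*$) convergence. For the bilinear term, split
\[
\bigl(v_{\epsilon,m}\theta_{\epsilon,m}-v^{*}_\epsilon\theta^{*}_\epsilon,\nabla\phi\,\psi\bigr)=\bigl((v_{\epsilon,m}-v^{*}_\epsilon)\theta_{\epsilon,m},\nabla\phi\,\psi\bigr)+\bigl(v^{*}_\epsilon(\theta_{\epsilon,m}-\theta^{*}_\epsilon),\nabla\phi\,\psi\bigr).
\]
The first piece is $O\bigl(\|v_{\epsilon,m}-v^{*}_\epsilon\|_{L^2(L^2)}\|\theta_0\|_{L^\infty}\bigr)\to 0$ by the mass conservation bound $\|\theta_{\epsilon,m}\|_\infty\le \|\theta_0\|_\infty$, and the second tends to zero because $v^{*}_\epsilon\nabla\phi\,\psi\in L^{2}(0,T;L^2(\Omega))$ and $\theta_{\epsilon,m}\to\theta^{*}_\epsilon$ strongly there.

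Choosing $\psi(0)=1$, $\psi(T)=0$ identifies $\theta^{*}_\epsilon(0)=\theta_0$, while $\psi(T)=1$ gives $\theta_{\epsilon,m}(T)\rightharpoonup \theta^{*}_\epsilon(T)$ weakly in $(H^1(\Omega))'$. Thus $(\theta^{*}_\epsilon,v^{*}_\epsilon)$ is a weak solution of \eqref{App_EQ01}--\eqref{App_ini} associated with $g^{*}_\epsilon$. Finally, weak lower semicontinuity of the norms gives
\[
\|\theta^{*}_\epsilon(T)\|_{(H^1)'}\le\varliminf_{m\to\infty}\|\theta_{\epsilon,m}(T)\|_{(H^1)'},\qquad \|g^{*}_\epsilon\|_{U_{\epsilon_{\text{ad}}}}\le\varliminf_{m\to\infty}\|g_{\epsilon,m}\|_{U_{\epsilon_{\text{ad}}}},
\]
so $J_\epsilon(g^{*}_\epsilon)\le\varliminf J_\epsilon(g_{\epsilon,m})=\inf J_\epsilon$, making $g^{*}_\epsilon$ optimal. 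The main technical point, passage to the limit in the bilinear term, is here easier than in Theorem~\ref{existence_opt} because the $\epsilon$-diffusion gives $\theta_{\epsilon,m}\in L^{2}(0,T;H^1)$, so the standard compactness argument suffices without appealing to \cite[Cor.~II.1]{lions1998mathematical}.
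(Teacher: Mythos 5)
Your argument is correct and is essentially the expansion the paper intends: the paper's own ``proof'' of this theorem is the two-line remark that existence for $(P_{\epsilon})$ follows immediately from \eqref{v_epsilon} and \eqref{EST_te} by repeating the direct method of Theorem~\ref{existence_opt}, and that is exactly what you have written out. The one place you genuinely deviate is the bilinear term. The paper (in Theorem~\ref{existence_opt}) passes to the limit using only weak-$*$ convergence of the scalars in $L^{\infty}(0,T;L^{\infty}(\Omega))$ paired with $v^{*}\cdot\nabla(\phi\psi)\in L^{1}(0,T;L^{1}(\Omega))$, whereas you exploit the fixed-$\epsilon$ bound $\sqrt{\epsilon}\,\|\theta_{\epsilon,m}\|_{L^{2}(0,T;H^{1})}\leq C$ from \eqref{EST_te} to get strong $L^{2}(0,T;L^{2}(\Omega))$ convergence via Aubin--Lions--Simon; that is a legitimate simplification available only in the diffusive problem. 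However, your justification of the second piece is not quite right as stated: for a test function $\phi\in H^{1}(\Omega)$ and $v^{*}_{\epsilon}\in L^{2}(0,T;V^{1}_{n}(\Omega))$ one only has $v^{*}_{\epsilon}\cdot\nabla\phi\in L^{2}(0,T;L^{3/2}(\Omega))$ in three dimensions (Sobolev embedding plus H\"older), not $L^{2}(0,T;L^{2}(\Omega))$, so it cannot be paired directly with strong $L^{2}(L^{2})$ convergence of $\theta_{\epsilon,m}-\theta^{*}_{\epsilon}$. The repair is immediate: interpolate the strong $L^{2}$ convergence against the uniform $L^{\infty}(0,T;L^{\infty}(\Omega))$ bound from \eqref{EST_te} to obtain strong convergence in $L^{2}(0,T;L^{3}(\Omega))$, or simply fall back on the paper's weak-$*$ argument (your bounds also give \eqref{2con_theta} for the subsequence). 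A last cosmetic point: for the diffusive equation the $L^{\infty}$ norm is only non-increasing by the maximum principle rather than conserved, but the uniform bound is all your estimate actually uses.
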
 

We now derive the first-order  necessary  optimality conditions  for the problem`$(P_{\epsilon})$  by using a variational inequality  (cf.~\cite{lions1971}), that is,  
if $g_{\epsilon}$ is an optimal solution of the problem~$(P_{\epsilon})$, then
\begin{align} 
J'_{\epsilon}(g_{\epsilon})\cdot (f_{\epsilon}-g_{\epsilon})\geq 0, \quad  f_{\epsilon}\in U_{\epsilon_{ad}}. \label{App_var_ineq}
\end{align}

Let $w_{\epsilon}=v'(g_{\epsilon})\cdot h_{\epsilon}$ be the G\^{a}teaux derivative of $v_{\epsilon}$ with respect to $g_{\epsilon}$ in every direction $h$ in 
 $U_{\epsilon_{ad}}$.  Then  by \eqref{5L_oper}, we have  
 \begin{align}
 w_{\epsilon}&=(Lh_{\epsilon})(t)  \in 
 L^{2}(0,T; V^{3/2}_{n}(\Omega))\cap H^{3/4}(0,T; V^0_n(\Omega))\subset C([0,T]; V^{1/2}_n(\Omega)), \label{App_EST_w} 
 \end{align}
which  is the solution to the  Stokes equations \eqref{Stokes1}--\eqref{ini} with the boundary condition $g=h_{\epsilon}$ and the initial condition is zero.

Now denote by  $\tz=\te'(g)\cdot h_{\epsilon}$ the G\^{a}teaux derivative of  $\te$ with respect to $g_{\epsilon}$. Then $\tz$ satisfies the  equation
  \begin{align}
  \frac{ \partial \tz}{\partial t} -\epsilon \Delta \tz+ w_{\epsilon}\cdot \nabla \te+v_{\epsilon}\cdot \nabla \tz=0,
    \label{linearized_te}
    \end{align}
    with  the boundary condition 
    \begin{align}
    \epsilon\frac{\partial{\tz}}{\partial{n}}|_{\Gamma}=0\label{BC_tz}
    \end{align}
    and the initial condition 
    \begin{align}
  \tz(0)=0.\label{IC_tz}
  \end{align}
  To show that  \eqref{linearized_te}--\eqref{IC_tz} is well-posed, we  first establish an {\it a priori} estimate for~$\tz$.
  Taking the inner product of \eqref{linearized_te} with $\tz$ gives 
    \begin{align*}
&  \frac{1}{2}\frac{ d\|\tz\|^{2}_{L^{2}}}{d t}+\epsilon\|\nabla \tz\|^2_{L^2} =
-\int_{\Omega}(w_{\epsilon}\cdot \nabla \te) \tz\, dx
  -\int_{\Omega} (v_{\epsilon}\cdot \nabla \tz)\tz\, dx\\
  &\qquad=-\int_{\Omega}w_{\epsilon}\cdot \nabla ( \te \tz)\, dx
  +\int_{\Omega}(w_{\epsilon}  \te) \cdot \nabla  \tz\, dx
  -1/2\int_{\Omega} v_{\epsilon}\cdot \nabla \tz^2\, dx\\
  &\qquad= \int_{\Omega}(w_{\epsilon}  \te)\cdot \nabla \tz\, dx
   \leq \|w_{\epsilon}\|_{L^{4}}\| \te\|_{L^{4}}\|\nabla \tz\|_{L^{2}}\\
     &\qquad  \leq C \|w_{\epsilon}\|_{H^{d/4}}\| \te\|_{H^{d/4}}\|\nabla \tz\|_{L^{2}}, \quad d=2,3,\\
     &\qquad \leq C\|w_{\epsilon}\|^2_{H^{d/4}}\| \te\|^2_{H^{d/4}}
 +\frac{\epsilon}{2}\|\nabla \tz\|^2_{L^{2}},
\end{align*}
which follows
         \begin{align}
 \frac{ d\|\tz\|^{2}_{L^{2}}}{d t}+\epsilon\|\nabla \tz\|^2_{L^2}  
  \leq C\|w_{\epsilon}\|^2_{H^{d/4}}\| \te\|^2_{H^{d/4}}.\label{EST_tz}
\end{align}
To complete the estimate, it suffices to show the right hand side of \eqref{EST_tz} is integrable. Note that  
	 \begin{align*}
	&\int^T_{0}\|w_{\epsilon}\|^2_{H^{d/4}}\| \te\|^2_{H^{d/4}}\, dt\\
	&\quad  \leq  \begin{cases}
  C\int^T_0 \|w_{\epsilon}\|_{L^{2}}\|\nabla w\|_{L^{2}}\| \te\|_{L^{2}}\|\nabla \te\|_{L^{2}}\, dt
  \quad \text{if } \ d=2,\\
  C\int^T_0\|w_{\epsilon}\|^{3/2}_{H^{1/2}} \|w_{\epsilon}\|^{1/2}_{H^{3/2}}\| \te\|^{1/2}_{L^{2}}\|\nabla \theta_{\epsilon}\|^{3/2}_{L^2}\, dt
  \quad \text{if } \ d=3.
     \end{cases}\\
&\quad\leq
  \begin{cases}
    C\| w_{\epsilon}\|_{L^{\infty}(0, T; L^{2}(\Omega))}\| \te\|_{L^{\infty}(0, T; L^2(\Omega))} 
 \|w_{\epsilon}\|_{L^2(0, T; H^{1}(\Omega))}	 \|\te\|_{L^2(0, T; H^{1}(\Omega))} \quad \text{if } \ d=2,\\
   C\| w_{\epsilon}\|^{3/2}_{L^{\infty}(0, T; H^{1/2}(\Omega))}\| \te\|^{1/2}_{L^{\infty}(0, T; L^2(\Omega))} 
	 \| w_{\epsilon}\|^{1/2}_{L^2(0, T; H^{3/2}(\Omega))} \| \te\|^{3/2}_{L^2(0, T; H^{1}(\Omega))} \quad \text{if } \ d=3.
  \end{cases}
	 	\end{align*}
Applying   \eqref{EST_te} and \eqref{App_EST_w} gives 
	  \begin{align*}
&\|\tz\|^{2}_{L^{2}}+\epsilon\int^t_0\|\nabla \tz\|^2_{L^2}\, dt
< \infty.
 \end{align*}
The rest of the proof follows the standard approaches  for parabolic problems
 (cf.~\cite[p.\,342]{brezis2010functional}).

In order to   apply   the variational inequaity  \eqref{App_var_ineq} to derive   the optimality system,
we first rewrite  the  cost functional $J_{\epsilon}$  as 
 \begin{align*}
 J_{\epsilon}(g_{\epsilon})=\frac{1}{2}(\Phi_{\epsilon}(T), \te(T))
 +\frac{\gamma}{2}\int^T_{0}\langle g_{\epsilon}, g_{\epsilon}\rangle\, dt, \quad (P'_{\epsilon})
 \end{align*}
where $\Phi_{\epsilon}$ satisfies 
\begin{align}
\mathcal{A}\Phi_{\epsilon}(T)&=\theta_{\epsilon}(T) \label{Phi1} \\
 \frac{\partial \Phi_{\epsilon}(T)}{\partial n}&=0. \label{Phi2}
\end{align}
The Neumann boundary value problem \eqref{Phi1}--\eqref{Phi2} has a unique solution  $\Phi_{\epsilon}(T)=\mathcal{A}^{-1}\te(T)$
(cf.~\cite{lions1969quelques}, \cite{umezu1994lp})
and $\Phi_{\epsilon}(T)\in H^2(\Omega)$ due to $\te(T)\in L^2(\Omega)$ by  Theorem \ref{wellposed_te}.
 The   variational inequality \eqref{App_var_ineq} becomes
  \begin{align}
 J'_{\epsilon}(g_{\epsilon})\cdot h_{\epsilon}=(\Phi_{\epsilon}(T), \tz(T))
 +\gamma \int^T_{0}\langle g_{\epsilon}, h_{\epsilon}\rangle\, dt\geq  0, \quad h_{\epsilon}\in U_{\epsilon_{ad}}.  \label{app_varcost}
 \end{align}
 For given $\epsilon>0$,  the adjoint system  associated  with the cost functional $(P'_{\epsilon})$ is defined by	
\begin{align}
-\frac{ \partial{\rho_{\epsilon}}}{\partial t}- \epsilon \Delta \rho_{\epsilon}-v_{\epsilon}\cdot \nabla   \rho_{\epsilon}=0, \quad\label{App_rho}
\end{align}
with the boundary condition
\begin{align}
&\epsilon\frac{\partial \rho_{\epsilon}}{\partial n}|_{\Gamma}=0	 \label{BC_App_rho}
\end{align}
and the final time  condition
\begin{align}
\rho_{\epsilon}(T)=\Phi_{\epsilon}(T)\in H^2(\Omega), \label{App_rhoT}
\end{align}
where $v_{\epsilon}=v$  satisfies  the  Stokes equations \eqref{App_Stokes1}--\eqref{App_Stokes2} and \eqref{App_Stokes3}--\eqref{App_ini}.
Since $\rho_{\epsilon}(T)\in H^2(\Omega)$,  the compatibility condition for final and boundary data need to be satisfied, i.e., 
$\epsilon\frac{\partial \rho_{\epsilon}(T)}{\partial n}|_{\Gamma}=0$. This is  indeed true by \eqref{Phi2}. However, the compatibility condition will not get in the way as $\epsilon\to0$.

Replacing  $t$ by $T-t$  and  using the similar approach  as in  the proof of Theorem \ref{wellposed_te}, we obtain  that  there exists a unique solution $\rho_{\epsilon}\in C([0, T]; H^1(\Omega))\cap L^2(0, T; H^2(\Omega))$ to \eqref{App_rho}--\eqref{App_rhoT}, which  satisfies 
\begin{align}
&\|\tr\|_{L^{\infty}(0, T; H^{1}(\Omega))}+\sqrt{\epsilon}\|\tr\|_{L^{2}(0, T; H^{2}(\Omega))}+\|\frac{d \tr}{dt}\|_{L^{2}(0, T; L^{2}(\Omega))}\nonumber\\
&\qquad +\|v_{\epsilon}\cdot \nabla \tr\|_{L^{2}(0, T; L^{2}(\Omega))}\leq C(\te(T), v_{\epsilon}). \label{EST_tr}
\end{align}
If $v_{\epsilon}$ further satisfies  \eqref{EST_vH1L1},  then  using the same argument as in the proof of  Theorem \ref{cog_theta}  and   the relation between  the final conditions  given by \eqref{Diff_teT}, we have   
\begin{align}
\sup_{t\in [0, T]} \| \tr-\rho\|_{L^2}\to 0, \ \ \text{as}\ \ \epsilon\to 0, \label{covg_rhoL2}
 \end{align} 
 where $\rho$ satisfies 
 \begin{align}
 &- \frac{ \partial}{\partial t} \rho 
  -v\cdot \nabla \rho=0,  \quad v=v_{\epsilon}, \label{adj}\\
  &\rho(T)=\Lambda^{-2}\theta(T),
    \label{adj_final}
  \end{align}
  and  
  \begin{align}
  \sup_{t\in[0, T]}\|\nabla \rho\|_{L^2}<\infty. \label{EST_rho_H1}
  \end{align}
 In fact,  \eqref{adj}--\eqref{adj_final} define the adjoint system  of \eqref{EQ01}--\eqref{ini} associated with the cost functional $J$.  
We now establish the optimality system of the approximating problem $(P_{\epsilon})$ and its convergence  to the optimality  system of the problem~$(P)$.

\begin{thm}
\label{thm_App_opt_cond}
 Let $\theta_{0}\in L^{\infty}(\Omega)$ and $v_{0}\in V^{0}_{n}(\Omega)$. 
Assume that $g^{*}_{\epsilon}$ is an optimal controller of the problem~$(P_{\epsilon})$. If $(v_{\epsilon}, \theta_{\epsilon})$ is the corresponding  solution of  \eqref{App_EQ01}--\eqref{App_ini} and $\rho_{\epsilon}$ is the solution of the  adjoint equations \eqref{App_rho}--\eqref{App_rhoT} associated with $(v_{\epsilon}, \theta_{\epsilon})$
then 
\begin{align}
g^{*}_{\epsilon}=-\frac{1}{\gamma}L^{*}(\mathbb{P}(\theta_{\epsilon} \nabla \rho_{\epsilon}))
\in L^{2}(0,T; V^{3/2}_n(\Gamma))\cap H^{3/4}(0,T; V^0_n(\Gamma)). \label{App_opt_cond}
\end{align}
  \end{thm}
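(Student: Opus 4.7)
The plan is to start from the variational inequality \eqref{app_varcost} and reinterpret the directional derivative $(\Phi_\epsilon(T), z_\epsilon(T))$ as a boundary integral against $h_\epsilon$ via the adjoint state $\rho_\epsilon$. The key maneuver is an integration-by-parts identity that pairs the linearized equation \eqref{linearized_te}--\eqref{IC_tz} for $z_\epsilon$ against $\rho_\epsilon$ on the space-time cylinder $\Omega\times(0,T)$. Using $z_\epsilon(0)=0$, the final condition $\rho_\epsilon(T)=\Phi_\epsilon(T)$, the homogeneous Neumann conditions $\epsilon\partial z_\epsilon/\partial n=\epsilon\partial\rho_\epsilon/\partial n=0$ on $\Gamma$, the divergence-free and no-penetration properties of $v_\epsilon$, and the adjoint equation \eqref{App_rho}, one expects all volume terms involving the adjoint operator to cancel, leaving
\[
(\Phi_\epsilon(T),z_\epsilon(T))=-\int_0^T\!\!\int_\Omega (w_\epsilon\cdot\nabla\theta_\epsilon)\rho_\epsilon\,dx\,dt.
\]

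The next step is to move the gradient off $\theta_\epsilon$. Using again $\nabla\cdot w_\epsilon=0$ and $w_\epsilon\cdot n|_\Gamma=0$ (both guaranteed by \eqref{App_EST_w}), integration by parts produces
\[
-\int_0^T\!\!\int_\Omega (w_\epsilon\cdot\nabla\theta_\epsilon)\rho_\epsilon\,dx\,dt
=\int_0^T\!\!\int_\Omega w_\epsilon\cdot(\theta_\epsilon\nabla\rho_\epsilon)\,dx\,dt.
\]
Since $w_\epsilon\in V^0_n(\Omega)$, inserting the Leray projector $\mathbb{P}$ does not change the integral, and the identification $w_\epsilon=Lh_\epsilon$ together with the definition \eqref{L_adj} of $L^*$ converts this to a boundary duality pairing
\[
\int_0^T\langle h_\epsilon,\,L^*(\mathbb{P}(\theta_\epsilon\nabla\rho_\epsilon))\rangle\,dt.
\]
Substituting back into \eqref{app_varcost} yields
\[
\int_0^T\langle \gamma g^*_\epsilon+L^*(\mathbb{P}(\theta_\epsilon\nabla\rho_\epsilon)),\,h_\epsilon\rangle\,dt\geq 0
\qquad\forall\,h_\epsilon\in U_{\epsilon_{ad}}.
\]
Because $U_{\epsilon_{ad}}=L^2(0,T;V^0_n(\Gamma))$ is a linear subspace (not merely a convex cone), we may test against both $h_\epsilon$ and $-h_\epsilon$, forcing the bracket to vanish identically and producing the formula \eqref{App_opt_cond}.

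For the regularity claim, I would invoke Theorem \ref{wellposed_te} (so that $\theta_\epsilon\in L^\infty(0,T;L^\infty(\Omega))$) and the adjoint estimate \eqref{EST_tr} (so that $\nabla\rho_\epsilon\in L^\infty(0,T;L^2(\Omega))$), which together give $\theta_\epsilon\nabla\rho_\epsilon\in L^\infty(0,T;L^2(\Omega))\subset L^2(0,T;L^2(\Omega))$, and consequently $\mathbb{P}(\theta_\epsilon\nabla\rho_\epsilon)\in L^2(0,T;V^0_n(\Omega))$. Applying \eqref{2L_adj} with $s=0$ then places $L^*(\mathbb{P}(\theta_\epsilon\nabla\rho_\epsilon))$ in $L^2(0,T;V^{3/2}_n(\Gamma))\cap H^{3/4}(0,T;V^0_n(\Gamma))$, which is precisely the stated regularity.

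The main obstacle I anticipate is bookkeeping during the double integration by parts: one must carefully verify that every boundary contribution actually vanishes (the $\epsilon\partial/\partial n$ terms from both $z_\epsilon$ and $\rho_\epsilon$, plus the trace $w_\epsilon\cdot n$ needed to drop the convective transport term), and that the regularity of $z_\epsilon$ and $\rho_\epsilon$ supplied by the a priori estimates is enough to justify each of these manipulations rigorously. Everything else is then a clean consequence of the fact that $U_{\epsilon_{ad}}$ is a linear space, which turns the variational inequality into a variational equation and produces the pointwise-in-time formula for $g^*_\epsilon$.
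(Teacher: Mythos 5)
Your proposal follows essentially the same route as the paper's own proof: pairing the linearized equation for $z_\epsilon$ against the adjoint state $\rho_\epsilon$, integrating by parts so all boundary and volume terms cancel via \eqref{BC_tz}, \eqref{BC_App_rho}, \eqref{App_rho}, and the divergence-free/no-penetration properties of $w_\epsilon=Lh_\epsilon$, then reading off $g^*_\epsilon$ from the variational inequality (which becomes an equality on the linear space $U_{\epsilon_{ad}}$), with the regularity claim obtained exactly as in the paper from \eqref{EST_te}, \eqref{EST_tr}, the $L^2$-continuity of $\mathbb{P}$, and \eqref{2L_adj}. The argument is correct and complete in the same sense as the paper's.
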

\begin{proof}
First 
multiplying (\ref{linearized_te}) by $\rho_{\epsilon}$, we have
   \begin{align*}
&\int^{T}_{0}\left(  \frac{ \partial}{\partial t} \tz, \rho_{\epsilon}\right)\,dt
  + \int^{T}_{0}((Lh_{\epsilon}) \cdot \nabla \te, \rho_{\epsilon})\,dt
  +\int^{T}_{0}(v_{\epsilon}\cdot \nabla \tz, \rho_{\epsilon})\,dt
  =\int^{T}_{0}(\Delta \tz, \rho_{\epsilon})\, dt. 
  \end{align*}
  Integrating the first term with respect to $t$ and the third term with respect to $x$  yield
  \begin{align*}
& -\int^T_{0}\left( \frac{ \partial}{\partial t}\rho_{\epsilon}, \tz\right)\, dt + (\rho_{\epsilon}(T),\tz(T))
+ \int^{T}_{0}((Lh_{\epsilon}))\cdot \nabla \te, \rho_{\epsilon})\,dt\\
 &\qquad-\int^{T}_{0}(\tz, v_{\epsilon}\cdot \nabla \rho_{\epsilon})\,dt=\int^{T}_{0}(\epsilon  \tz, \Delta\rho_{\epsilon})\, dt,
  \end{align*}
where we used $(\Delta \tz, \rho_{\epsilon})=(\epsilon  \tz, \Delta\rho_{\epsilon})$ due to \eqref{BC_tz} and  \eqref{BC_App_rho}.
 In light of the adjoint equation   \eqref{App_rho} and the final condition  \eqref{App_rhoT}, we have
   \begin{align}
   (\Phi_{\epsilon}(T), z_{\epsilon}(T))= (\rho_{\epsilon}(T), z_{\epsilon}(T))
   = -\int^{T}_{0}((Lh_{\epsilon})\cdot \nabla \theta_{\epsilon}, \rho_{\epsilon})\,dt.
    \label{App_2adj}
  \end{align}
Combining  \eqref{app_varcost} with \eqref{App_2adj} yields 
 \begin{align}
 J'_{\epsilon}(g_{\epsilon})\cdot h_{\epsilon}=& -\int^{T}_{0}((Lh_{\epsilon})\cdot \nabla \theta_{\epsilon}, \rho_{\epsilon})\,dt
 +\gamma \int^T_{0}\langle g_{\epsilon}, h_{\epsilon}\rangle\, dt\geq 0.\label{2app_varcost}
 \end{align}
Note that   $\nabla\cdot (Lh_{\epsilon})=0$ and  $(Lh_{\epsilon})\cdot n|_{\Gamma}=0$. Thus
\begin{align}
\int^{T}_{0}((Lh_{\epsilon}) \cdot \nabla \te, \rho_{\epsilon})\,dt&=\int^{T}_{0}\int_{\Omega}(\mathbb{P} (Lh_{\epsilon})) \cdot \nabla (\te\rho_{\epsilon})\,dx\,dt-\int^{T}_{0}(\mathbb{P} (Lh_{\epsilon}),  \te \nabla \rho_{\epsilon})\,dt\label{0App_EST_Lh}\\
&=-\int^{T}_{0}(h_{\epsilon},  L^{*}\mathbb{P}(\te \nabla \rho_{\epsilon}))\,dt, \label{App_EST_Lh}
\end{align}
where $\te \nabla \rho_{\epsilon}\in L^2(0, T; L^2(\Omega))$. Therefore, based on  \eqref{2app_varcost}--\eqref{App_EST_Lh}  if $g^{*}_{\epsilon}$ is an optimal solution, then 
 \begin{align*}
 J'_{\epsilon}(g^*_{\epsilon})\cdot h_{\epsilon}=&\int^{T}_{0}(h_{\epsilon},  L^{*}(\mathbb{P}(\te \nabla \rho_{\epsilon})))\,dt
 +\gamma \int^T_{0}\langle g_{\epsilon}, h_{\epsilon}\rangle\, dt\geq 0, \quad h_{\epsilon}\in U_{\epsilon_{ad}}, 
 \end{align*}
 which gives
\[g^{*}_{\epsilon}=-\frac{1}{\gamma}L^{*}(\mathbb{P}(\te \nabla \rho_{\epsilon})). \]
Moreover, by the continuity of $\mathbb{P}$ on $L^{2}(\Omega)$ (cf.~\cite[p.\,13]{Te}), \eqref{EST_te}, and \eqref{EST_tr}, we have 
\begin{align}
&\|\mathbb{P}(\te \nabla \rho_{\epsilon}))\|_{ L^{2}}
\leq C \|\te \nabla \rho_{\epsilon}\|_{L^{2}(0, T:  L^{2}(\Omega))} \nonumber\\
&\qquad\leq C\|\te\|_{L^{\infty}(0, T; L^{\infty}(\Omega))} \|\tr\|_{L^{2}(0, T; H^{1}(\Omega))}<\infty. \label{EST_tetr}
\end{align}
Lastly,  combining \eqref{EST_tetr} with the regularity property of $L^*$ given by \eqref{2L_adj} yields  \eqref{App_opt_cond}. This completes the proof.
\end{proof}

\begin{remark}
\label{projector}
As mentioned in \cite[Remark 6]{badra2010abstract},  since the Leray projector $\mathbb{P}\colon L^2(\Omega)\to V^0_n(\Omega)$ can be extended from $H^s(\Omega), s>0$, to $V^s_n(\Omega)$,   its adjoint $\mathbb{P}^*\colon V^0_n(\Omega)\to L^2(\Omega)$ can be extended as a bounded
operator from $(V^s_n(\Omega))'$ to $(H^s(\Omega))'$ by 
\begin{align}
(\mathbb{P}^*\psi,  \varphi)_{(H^s(\Omega))', H^s(\Omega)}=(\psi,  \mathbb{P} \varphi)_{(V^s_n(\Omega))', V^s_n(\Omega)},
\quad \psi\in(V^s_n(\Omega))',\  \varphi\in V^s_n(\Omega). \label{duality_P}
\end{align}
Therefore, if $\te \nabla \rho_{\epsilon}\in L^2(0, T; H^s(\Omega))$, where $s<0$,  then we  use duality from \eqref{0App_EST_Lh} to \eqref{App_EST_Lh}  and replace $\mathbb{P}$ by $\mathbb{P}^*$. In this case,
\begin{align}
g^{*}_{\epsilon}=-\frac{1}{\gamma}L^{*}(\mathbb{P}^*(\te \nabla \rho_{\epsilon})). \label{opt_dualform}
\end{align}
\end{remark}
%

 In order to address the convergence of the  optimality conditions for the approximating problem,  we shall assume $\theta_0\in L^{\infty}(\Omega)\cap H^1(\Omega)$ and $v_0\in V^{d/2-1+2\eta}_n(\Omega)$, $d=2,3$, in the rest of our discussion.
\begin{thm}
\label{cong_opt_pair}
Assume $\theta_0\in L^{\infty}(\Omega)\cap H^1(\Omega)$ and $v_0\in V^{d/2-1+2\eta}_n(\Omega)$, $d=2,3$. Let $(g^*_{\epsilon}, v^*_{\epsilon},  \te^*)$ be an optimal solution for $(P_{\epsilon})$. Then, there exists  $(g^*, v^*, \theta^*)$ such that
 a subsequence of $(g^*_{\epsilon}, v^*_{\epsilon},  \te^*)$ in terms of $\epsilon$, still denoted by  $\{g^*_{\epsilon}, v^*_{\epsilon},  \te^*\}$, satisfying 
\begin{align*}
&g^{*}_{\epsilon}\to g^* \  \ \text{strongly in} \ \   L^{2}(0, T; V^{3/2-\delta}_{n}(\Gamma)),\\
&v^{*}_{\epsilon}\to v^* \  \ \text{strongly in} \ \   L^{2}(0, T; V^{d/2+2\eta}_{n}(\Omega)), 
\end{align*}
for   $0<\delta\leq  3-d/2-2\eta$, and 
\begin{align}
\te^{*}\to \theta^* \ \ \text{strongly in}  \ \ L^{2}(\Omega), \ \text{uniformly in} \ t\in [0, T],  \ \label{cong_thetastar}
\end{align}
as $\epsilon \to 0$.
Moreover, $(g^*, v^*, \theta^*)$ is an optimal solution to the problem~$(P)$, which can be solved from 
\begin{align}
g^{*}=-\frac{1}{\gamma}L^{*}(\mathbb{P}(\theta^* \nabla \rho^*))
\in L^{2}(0, T; V^{3/2}_{n}(\Gamma))\cap H^{3/4}(0, T; V^{0}_{n}(\Gamma)),
\label{Ori_pt_cond}
 \end{align}
 where $\rho^*$ is the solution to the dual problem \eqref{adj}--\eqref{adj_final} corresponding to $(v^*, \theta^*)$.

\end{thm}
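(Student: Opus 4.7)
The plan is to combine uniform-in-$\epsilon$ estimates, compactness, and a lower-semicontinuity argument: obtain uniform bounds on $(g^*_{\epsilon}, v^*_{\epsilon}, \theta^*_{\epsilon}, \rho^*_{\epsilon})$; extract convergent subsequences; pass to the limit in the state, adjoint, and optimality identities; and finally verify that the limit minimizes $J$ over $U_{\text{ad}}$.

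For the uniform bounds, since $g^*_{\epsilon}$ minimizes $J_{\epsilon}$, comparison with the admissible control $g = 0$ gives $\gamma \|g^*_{\epsilon}\|^2_{L^2(0,T;V^0_n(\Gamma))} \le 2 J_{\epsilon}(0)$, where $J_{\epsilon}(0)$ is bounded independently of $\epsilon$ via the parabolic maximum principle $\|\theta^*_{\epsilon}\|_{L^{\infty}(0,T;L^{\infty}(\Omega))} \le \|\theta_0\|_{L^{\infty}}$. Inserting this into \eqref{EST_v_L2} yields a uniform bound on $v^*_{\epsilon}$, and \eqref{EST_tr} then delivers a uniform bound on $\rho^*_{\epsilon}$ in $L^{\infty}(0,T;H^1(\Omega))$ with time derivative in $L^2(0,T;L^2(\Omega))$ (the $\sqrt{\epsilon}$-weighted $H^2$-bound is not needed for the limit). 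Plugging $\|\mathbb{P}(\theta^*_{\epsilon}\nabla\rho^*_{\epsilon})\|_{L^2(0,T;L^2)} \le \|\theta_0\|_{L^{\infty}} \|\rho^*_{\epsilon}\|_{L^2(0,T;H^1)}$ into the optimality identity \eqref{App_opt_cond} and using \eqref{2L_adj} at $s = 0$, I obtain a uniform bound on $g^*_{\epsilon}$ in $L^2(0,T;V^{3/2}_n(\Gamma)) \cap H^{3/4}(0,T;V^0_n(\Gamma))$, which embeds into $\mathbb{S}$ for both $d = 2, 3$.

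For the compactness step, the Aubin--Lions--Simon lemma provides a compact embedding of $L^2(0,T;V^{3/2}_n(\Gamma)) \cap H^{3/4}(0,T;V^0_n(\Gamma))$ into $L^2(0,T;V^{3/2-\delta}_n(\Gamma))$ for every $\delta > 0$; the range $0 < \delta \le 3 - d/2 - 2\eta$ keeps the limit $g^*$ inside $\mathbb{S}$, so that Lemma \ref{App_EST_v} and Theorem \ref{cog_theta} apply. The variation-of-parameters formula $v^*_{\epsilon} = e^{-At} v_0 + L g^*_{\epsilon}$ together with \eqref{5L_oper} transfers this strong convergence to $v^*_{\epsilon} \to v^*$ in $L^2(0,T;V^{d/2+2\eta}_n(\Omega))$, and hence in $L^2(0,T;L^{\infty}(\Omega))$ by Sobolev embedding. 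For the scalar, I decompose $\theta^*_{\epsilon} - \theta^* = (\theta^*_{\epsilon} - \theta(g^*_{\epsilon})) + (\theta(g^*_{\epsilon}) - \theta^*)$, where $\theta(g^*_{\epsilon})$ solves the original (non-diffusive) transport equation driven by $v^*_{\epsilon}$. The first difference is $O(\epsilon^{1/2})$ in $L^{\infty}(0,T;L^2)$ by Theorem \ref{cog_theta} (with constants uniform in $\epsilon$ due to the $\mathbb{S}$-bound); the second satisfies a linear transport equation with source $(v^*_{\epsilon} - v^*) \cdot \nabla \theta^*$, which vanishes in $L^2(0,T;L^2)$ by the $L^2(L^{\infty})$-convergence of $v^*_{\epsilon}$ and $\nabla\theta^* \in L^{\infty}(0,T;L^2)$ from \eqref{1EST_te_H1}, and a Gronwall estimate closes the bound in $L^{\infty}(0,T;L^2)$. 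A parallel decomposition treats $\rho^*_{\epsilon} \to \rho^*$ using \eqref{covg_rhoL2}.

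Finally, passing to the limit in the weak formulation of \eqref{App_EQ01}--\eqref{App_ini} (the term $\epsilon\Delta\theta^*_{\epsilon}$ vanishes in distribution and $v^*_{\epsilon} \theta^*_{\epsilon} \to v^* \theta^*$ in $L^2(0,T;L^2)$ by the above convergences) identifies $\theta^*$ as the weak solution corresponding to $v^*$ per Definition \ref{def1}; an analogous passage produces $\rho^*$ satisfying \eqref{adj}--\eqref{adj_final}. In the optimality identity, $\theta^*_{\epsilon} \nabla \rho^*_{\epsilon} \to \theta^* \nabla \rho^*$ weakly in $L^2(0,T;L^2)$ (strong convergence of $\theta^*_{\epsilon}$ times weak convergence of $\nabla \rho^*_{\epsilon}$, with uniform $L^2$-bounds), and the continuity of $L^* \mathbb{P}$ from \eqref{2L_adj} yields \eqref{Ori_pt_cond}. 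To confirm $g^*$ minimizes $J$, for any $g \in \mathbb{S}$ I have $J_{\epsilon}(g^*_{\epsilon}) \le J_{\epsilon}(g)$, and taking $\epsilon \to 0$ yields $J(g^*) \le \liminf_{\epsilon \to 0} J_{\epsilon}(g^*_{\epsilon}) \le \lim_{\epsilon \to 0} J_{\epsilon}(g) = J(g)$ by weak lower semicontinuity and Theorem \ref{cog_theta}; density of $\mathbb{S}$ in $U_{\text{ad}}$ together with continuity of $g \mapsto J(g)$ in $L^2(0,T;V^0_n(\Gamma))$ extends the inequality to all admissible $g$. The main obstacle is the coupled nonlinearity $\theta^*_{\epsilon} \nabla \rho^*_{\epsilon}$ in the optimality identity---only one factor converges strongly---which is precisely why the bootstrap through the optimality identity itself, promoting the weak $L^2(V^0_n(\Gamma))$-bound on $g^*_{\epsilon}$ into a compact bound in a subspace of $\mathbb{S}$, is essential.
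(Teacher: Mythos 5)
Your proposal is correct and follows essentially the same route as the paper's proof: a uniform $L^2(0,T;L^2(\Omega))$ bound on $\theta^*_{\epsilon}\nabla\rho^*_{\epsilon}$ is bootstrapped through the optimality identity and the regularity of $L^*$ to place $g^*_{\epsilon}$ compactly in $L^{2}(0,T;V^{3/2}_n(\Gamma))\cap H^{3/4}(0,T;V^{0}_n(\Gamma))$, the strong convergence is transferred to $v^*_{\epsilon}$ via the variation-of-parameters formula, a Gronwall estimate gives \eqref{cong_thetastar}, the product $\theta^*_{\epsilon}\nabla\rho^*_{\epsilon}$ is identified by strong-times-weak convergence, and optimality of the limit follows by passing to the limit in the comparison inequality. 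Your additions — the explicit uniform bound via comparison with $g=0$, the two-step decomposition of $\theta^*_{\epsilon}-\theta^*$ through the inviscid solution driven by $v^*_{\epsilon}$, and the density argument restricting the final comparison to $g\in\mathbb{S}$ — are organizational refinements of steps the paper carries out in a single pass, not a different method.
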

\begin{proof} Step 1: We first show the strong convergence of  the optimal solution  to the problem~$(P_{\epsilon})$. 
Wth the help of  \eqref{EST_te} and  \eqref{EST_tr}   we get
\begin{align}
    \|\theta^*_{\epsilon}\nabla \rho^*_{\epsilon}\|_{L^2(0, T; L^2(\Omega))}
   \leq \|\theta^*_{\epsilon}\|_{L^{\infty}(0, T; L^\infty(\Omega))}\|\nabla \rho^*_{\epsilon}\|_{L^2(0, T; L^2(\Omega))}\leq C,  \label{convg_rhote} 
 \end{align}
independent of $ \epsilon$. Thus there exist  subsequences, still denoted by $\{\theta^*_{\epsilon}\nabla \rho^*_{\epsilon}\}$ 
and $\{ \nabla  \rho^*_{\epsilon} \}$, such   that  
 \begin{align*}    
 \theta^*_{\epsilon}\nabla \rho^*_{\epsilon}\to \xi \ \ \text{weakly in }\ \ L^2(0, T; L^2(\Omega))  \ \ \text{as}\ \  \epsilon\to 0, 
\end{align*}
for some  $\xi \in L^2(0, T; L^2(\Omega))$,
and 
\begin{align}
& \nabla  \rho^*_{\epsilon}   \to \nabla \rho^*   
    \ \ \text{weakly in }\ \ L^2(0, T; L^2(\Omega))  \ \ \text{as}\ \  \epsilon\to 0.
\label{convg_rhoH1}
   \end{align}
Based on the property  of $L^*$ given by \eqref{5L_oper} and the continuity of $\mathbb{P}$, there exists a subsequence $\{g^*_{\epsilon}=L^*( \mathbb{P}(\theta^*_{\epsilon}\nabla \rho^*_{\epsilon}))\}$ in terms of $\epsilon$, such that 
\begin{align*}
g^*_{\epsilon}\to g^*=L^*(\mathbb{P}\xi) \quad  &\text{weakly in}\quad  L^{2}(0, T; V^{3/2}_{n}(\Gamma))\cap H^{3/4}(0, T; V^{0}_n(\Gamma)), \ \ \text{as}\ \ \epsilon\to 0, 
\end{align*}
Thus in light of \cite[Theorem~2.2, p.\,186]{Te}, 
\begin{align*}
g^*_{\epsilon}\to g^* \quad \text{strongly in}\quad  L^{2}(0, T; V^{3/2-\delta}_{n}(\Gamma)), \quad\forall 0<\delta<\frac{3}{2}. 
\end{align*}
Correspondingly,  by \eqref{var_form}, \eqref{semigroup}, and  \eqref{5L_oper}, we have for $v_0\in V^{d/2-1+2\eta}_n(\Omega), d=2,3,$ that 
  \begin{align}
v^*_{\epsilon}-v^{*}=L(g^*_{\epsilon}-g^*)\to 0\quad \text{strongly in}\quad &L^{2}(0, T; V^{\min\{d/2+2\eta, 3-\delta\}}_{n}(\Omega))\nonumber\\
&=L^{2}(0, T; V^{d/2+2\eta}_{n}(\Omega)).
\label{App_1EST_vstar}
 \end{align}
 for $0<\delta \leq 3-d/2-2\eta$. Let $\theta^*$ be the solution of  \eqref{EQ01}  associated with $v^*$ and initial condition $\theta_0$.  Next we prove
 \begin{align}
 \sup_{t\in [0, T]}\|\nabla\theta^*\|_{L^2}<\infty.\label{App_EST_thetastar_H1}
 \end{align}
According to Lemma \ref{App_EST_v}, we know that  $\int^{T}_{0}\|\nabla v^*\|_{L^{\infty}}\, dt<\infty$ for $v_0\in V^{d/2-1+2\eta}_n(\Omega)$, $d=2,3$, and 
$g^*\in L^{2}(0,T; V^{3/2}_n(\Gamma))\cap H^{3/4}(0,T; V^0_n(\Gamma))$.
Thus \eqref{App_EST_thetastar_H1} follows immediately from \eqref{1EST_te_H1}.

To establish  \eqref{cong_thetastar}, we let $\Theta^*_{\epsilon}=\te^*-\theta^*$ and $V^*_{\epsilon}=v^*_{\epsilon}-v^*$. Then    
 $\Theta^*_{\epsilon}$ satisfies 
\begin{align}
   & \frac{\partial \Theta^*_{\epsilon}}{\partial t}-\epsilon \Delta \Theta^*_{\epsilon}+ v^*_{\epsilon}\cdot \nabla \Theta^*_{\epsilon}
   +V^*_{\epsilon}\cdot \nabla \theta^*
   =\Delta\theta^* ,  \label{Diff_EQ01star}
  \end{align}
  with the boundary condition
   \begin{align}
    \epsilon \frac{\partial \Theta^*_{\epsilon}}{\partial n}|_{\Gamma}= - \epsilon \frac{\partial \theta^*}{\partial n} |_{\Gamma}\label{App_BC_EQ01star}
      \end{align}
      and the initial condition $\Theta^*_{\epsilon}(0)=V^*_{\epsilon}(0)=0$.
Recall by \eqref{App_1EST_vstar} that 
   \begin{align}
V^*_{\epsilon}\to 0\quad \text{strongly in}\ \  L^{2}(0, T; V^{d/2+2\eta}_{n}(\Omega)), \ \ \text{as}\ \ \epsilon\to 0.\label{App_1EST_Vstar}
 \end{align}
As shown   in  the proof of Theorem~\ref{cog_theta}, applying  $L^2$-estimate for $\Theta^*_{\epsilon}$ follows 
\begin{align}
    \frac{d \|\Theta^*_{\epsilon}\|^2_{L^2}}{d t}+\epsilon \|\nabla \Theta^*_{\epsilon}\|^2_{L^2} 
     \leq &\epsilon \| \nabla  \theta^*\|^2_{L^{2}}+\|V^*_{\epsilon}\cdot \nabla \theta^*\|^2_{L^2}+\|\Theta^*_{\epsilon}\|^2_{L^2}. \label{L2_ThetaStar}
  \end{align}
Using  the Gronwall inequality and $\Theta^*_{\epsilon}(0)=0$, we get
  \begin{align}
    \|\Theta^*_{\epsilon}\|^2_{L^2}
     &\leq \int^t_{0}e^{(t-\tau)}(\epsilon \| \nabla  \theta^*\|^2_{L^{2}}+\|V^*_{\epsilon}\cdot \nabla \theta^*\|^2_{L^2})\,d\tau, \label{2L2_ThetaStar}
  \end{align} 
  where  by \eqref{App_EST_thetastar_H1} and \eqref{App_1EST_Vstar}, we have 
  \begin{align*}
  &\sup_{t\in[0, T]}  \|\Theta^*_{\epsilon}\|^2_{L^2}\leq \epsilon (e^T-1)\sup_{t\in[0, T]}\|\nabla \theta^*\|^2_{L^2}
  +e^T\int^T_{0}\|V^*_{\epsilon}\|^2_{L^\infty}\| \nabla \theta^*\|^2_{L^2}\,d\tau\\
 & \qquad \leq   \epsilon (e^T-1)\sup_{t\in[0, T]}\|\nabla \theta^*\|^2_{L^2}+e^T(\sup_{t\in [0, T]} \|\nabla \theta^*\|^2_{L^2}) \|V^*_{\epsilon}\|^2_{L^2(0, T; V^{d/2+\eta}_n(\Omega))}\to 0,
 \end{align*}
 as $\epsilon \to 0$. Therefore, \eqref{cong_thetastar} holds.

Step 2: We claim that $(g^*, v^*, \theta^*) $ is an optimal solution  to the problem~$(P)$.  Since $(g^*_{\epsilon}, v^*_{\epsilon}, \theta^*_{\epsilon})$ is an optimal solution to  the problem~$(P_{\epsilon})$, we have
\begin{align*}
\|\Lambda^{-1}\theta^*_{\epsilon}(T)\|^2_{L^2} +\int^T_{0}\|g^*_{\epsilon}\|^2_{L^2(\Gamma)}\, dt
\leq\|\Lambda^{-1}\theta_{\epsilon}(T)\|^2_{L^2} +\int^T_{0}\|g\|^2_{L^2(\Gamma)}\, dt,
\end{align*}
for any $g\in L^{2}(0, T; V^0_n(\Gamma))$, where $\te$ is the solution of \eqref{App_EQ01} associated with $(g, v_{\epsilon})$. Letting $\epsilon\to 0$ and  using the weakly lower semicontinuity  of norms, the continuity of $\Lambda^{-1}$, and the strong convergence of $\theta^*_{\epsilon}$ to $\theta^*$ and $\theta_{\epsilon}$ to $\theta$, we obtain 
\begin{align*}
\|\Lambda^{-1}\theta^*(T)\|^2_{L^2} +\int^T_{0}\|g^*\|^2_{L^2(\Gamma)}\, dt
\leq\|\Lambda^{-1}\theta(T)\|^2_{L^2} +\int^T_{0}\|g\|^2_{L^2(\Gamma)}\, dt,
\end{align*}
for any $g\in L^{2}(0, T; V^0_n(\Gamma))$. Thus, $(g^*,v^*,  \theta^*)$ is an optimal solution to the problem~$(P)$. In particular, if  we set $g=g^*$, then infimum  of $J$ can be reached. 

Step 3: 
 Combining \eqref{convg_rhoH1} with $\sup_{t\in[0, T]}\|\theta^*\|_{L^\infty}=\|\theta_0\|_{L^\infty}$ and \eqref{cong_thetastar}, we can easily show that  
 \begin{align}
\theta^*_{\epsilon}\nabla \rho^*_{\epsilon}\to  \theta^*\nabla \rho^* \ \ \text{weakly in }\ \ L^2(0, T; L^1(\Omega))  \ \ \text{as}\ \  \epsilon\to 0. 
\label{1convg_diff_opt}
\end{align}
However,  $\theta^*_{\epsilon}\nabla \rho^*_{\epsilon}$ also converges to  $\xi$   weakly   in  $L^2(0, T; L^1(\Omega)) $. Thus 
$\xi=\theta^*\nabla \rho^*$, and therefore, 
\begin{align}
g^{*}=\lim_{\epsilon\to 0} g^*_{\epsilon}=-\frac{1}{\gamma}\lim_{\epsilon\to 0}L^{*}(\mathbb{P}(\theta^*_{\epsilon} \nabla \rho^*_{\epsilon}))
=-\frac{1}{\gamma}L^{*}(\mathbb{P}(\theta^* \nabla \rho^*)),\label{2Ori_pt_cond}
\end{align}
which completes the proof.
\end{proof}

 The following theorem shows that any  optimal controller to the problem~$(P)$ can be derived    from the optimality condition of the approximating control problem
  $(P_{\epsilon})$.

\begin{thm} \label{key}
Assume $\theta_0\in L^{\infty}(\Omega)\cap H^1(\Omega)$ and  $v_0\in V^{d/2-1+2\eta}_n(\Omega), d=2,3$. If $(g^*, v^*, \theta^*)$  is an optimal solution to  the problem~$(P)$, then $g^*$ can be solved from \eqref{EQ01}--\eqref{ini}, \eqref{adj}--\eqref{adj_final}, and the optimality condition \eqref{Ori_pt_cond}.
\end{thm}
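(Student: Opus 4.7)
The idea is to associate to the prescribed optimal $g^{*}$ of $(P)$ a regularized variant of the approximating problem $(P_{\epsilon})$ whose minimizers are forced to converge to $g^{*}$, so that the optimality condition of Theorem \ref{thm_App_opt_cond} can be passed to the limit. Concretely, for each $\epsilon>0$ I would consider the penalized cost
\[
\widetilde{J}_{\epsilon}(g_{\epsilon}) = J_{\epsilon}(g_{\epsilon}) + \tfrac{1}{2}\|g_{\epsilon}-g^{*}\|^{2}_{L^{2}(0,T;V^{0}_{n}(\Gamma))}, \quad g_{\epsilon}\in U_{\epsilon_{ad}}.
\]
Existence of a minimizer $\widetilde{g}_{\epsilon}$ follows verbatim from Theorem \ref{App_existence_opt} since the penalty is convex and weakly lower semicontinuous. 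Repeating the variational computation of Theorem \ref{thm_App_opt_cond} and accounting for the extra quadratic term gives the shifted optimality condition
\[
(\gamma+1)\,\widetilde{g}_{\epsilon} = g^{*} - L^{*}\bigl(\mathbb{P}(\widetilde{\theta}_{\epsilon}\nabla \widetilde{\rho}_{\epsilon})\bigr),
\]
where $(\widetilde{v}_{\epsilon},\widetilde{\theta}_{\epsilon},\widetilde{\rho}_{\epsilon})$ is the state-adjoint triple associated with $\widetilde{g}_{\epsilon}$ via \eqref{App_EQ01}--\eqref{App_ini} and \eqref{App_rho}--\eqref{App_rhoT}.

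Next I would prove $\widetilde{g}_{\epsilon}\to g^{*}$ strongly in $L^{2}(0,T;V^{0}_{n}(\Gamma))$. Testing the inequality $\widetilde{J}_{\epsilon}(\widetilde{g}_{\epsilon})\leq \widetilde{J}_{\epsilon}(g^{*})=J_{\epsilon}(g^{*})$ against the convergence $J_{\epsilon}(g^{*})\to J(g^{*})$, a consequence of \eqref{Diff_teT}, yields uniform bounds on $\widetilde{g}_{\epsilon}$. Extracting a weakly convergent subsequence $\widetilde{g}_{\epsilon}\rightharpoonup \widetilde{g}$ and invoking the convergence machinery of Theorems \ref{cog_theta} and \ref{cong_opt_pair} for the associated states and adjoints, weak lower semicontinuity gives
\[
J(\widetilde{g}) + \tfrac{1}{2}\|\widetilde{g}-g^{*}\|^{2} \;\leq\; \liminf_{\epsilon\to 0}\widetilde{J}_{\epsilon}(\widetilde{g}_{\epsilon}) \;\leq\; J(g^{*}).
\]
Optimality of $g^{*}$ for $(P)$ forces $J(g^{*})\leq J(\widetilde{g})$, hence $\widetilde{g}=g^{*}$, and then $\widetilde{J}_{\epsilon}(\widetilde{g}_{\epsilon})\to J(g^{*})$. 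Since $\|\cdot\|_{L^{2}}$ is weakly lower semicontinuous and the three nonnegative pieces of $\widetilde{J}_{\epsilon}(\widetilde{g}_{\epsilon})$ must each achieve their natural limits, I would upgrade weak convergence to strong convergence of $\widetilde{g}_{\epsilon}$ to $g^{*}$.

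Finally I would pass to the limit in the shifted optimality condition. Strong convergence $\widetilde{g}_{\epsilon}\to g^{*}$ gives $\widetilde{v}_{\epsilon}\to v^{*}$ in $L^{2}(0,T;V^{d/2+2\eta}_{n}(\Omega))$ by \eqref{5L_oper}, and then \eqref{cong_thetastar} applied to this subsequence yields $\widetilde{\theta}_{\epsilon}\to \theta^{*}$ strongly in $L^{2}(\Omega)$, uniformly in $t\in[0,T]$. The adjoint bound \eqref{EST_tr} then delivers $\nabla\widetilde{\rho}_{\epsilon}\rightharpoonup \nabla \rho^{*}$ weakly in $L^{2}(0,T;L^{2}(\Omega))$ as in \eqref{convg_rhoH1}. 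Combined with $\|\widetilde{\theta}_{\epsilon}\|_{L^{\infty}}=\|\theta_{0}\|_{L^{\infty}}$ from \eqref{EST_theta_Lp}, the argument leading to \eqref{1convg_diff_opt} identifies the weak limit $\widetilde{\theta}_{\epsilon}\nabla\widetilde{\rho}_{\epsilon}\rightharpoonup \theta^{*}\nabla \rho^{*}$, and continuity of $L^{*}\mathbb{P}$ from Remark \ref{projector} gives
\[
(\gamma+1)g^{*} = g^{*} - L^{*}\bigl(\mathbb{P}(\theta^{*}\nabla \rho^{*})\bigr),
\]
which simplifies to \eqref{Ori_pt_cond}.

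The main obstacle is the identification of the weak limit of the nonlinear product $\widetilde{\theta}_{\epsilon}\nabla\widetilde{\rho}_{\epsilon}$: only the factor $\widetilde{\theta}_{\epsilon}$ converges strongly, while $\nabla\widetilde{\rho}_{\epsilon}$ is merely weakly convergent. The crucial leverage comes from the $L^{\infty}$ conservation of $\widetilde{\theta}_{\epsilon}$ together with the fact that strong $L^{2}$ convergence $\widetilde{g}_{\epsilon}\to g^{*}$ pins down a unique limiting triple $(v^{*},\theta^{*},\rho^{*})$; otherwise one could not rule out that the limit corresponds to a different optimal branch of $(P)$ than the one associated with the given $g^{*}$.
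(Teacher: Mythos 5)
Your proposal is correct and follows essentially the same route as the paper's own proof: the paper likewise introduces the penalized problem $(\hat{P}_{\epsilon})$ with the quadratic term $\tfrac{1}{2}\int_{0}^{T}\|g-g^{*}\|^{2}_{L^{2}(\Gamma)}\,dt$, shows via weak lower semicontinuity and the optimality of $g^{*}$ that the penalized minimizers converge strongly to $g^{*}$, and then passes to the limit in the shifted optimality condition $\gamma\hat{g}^{*}_{\epsilon}+\hat{g}^{*}_{\epsilon}-g^{*}=-L^{*}\mathbb{P}(\hat{\theta}_{\epsilon}\nabla\hat{\rho}_{\epsilon})$ using the convergence machinery of Theorem \ref{cong_opt_pair}. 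The only cosmetic difference is that you derive strong convergence of $\widetilde{g}_{\epsilon}$ from convergence of the cost values, whereas the paper imports it directly from the compactness argument of Theorem \ref{cong_opt_pair}; both are valid.
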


\begin{proof}
Let 
$(g^*, v^*, \theta^*)$ be any optimal solution to the problem~$(P)$.  We first  employ the idea  as in  \cite[Theorem~5]{barbu2016optimal} to impose  a penalization on the cost functional $J_{\epsilon}$ as to establish the relation  between $(g^*, v^*, \theta^*)$ and the optimal solution to the new defined cost functional. 
Consider  the  minimization problem
\begin{align*}
\min\{J_{\epsilon}(g)+ \frac{1}{2}\int^T_{0}\|g-g^*\|^2_{L^2(\Gamma)}\, dt\}.\quad  (\hat{P}_{\epsilon})
\end{align*}
If we let $(\hat{g}_{\epsilon}, \hat{v}_{\epsilon},\hat{\theta}_{\epsilon})$ be the optimal solution  to the problem~$(\hat{P}_{\epsilon})$, then 
\begin{align}
J_{\epsilon}(\hat{g}_{\epsilon})+ \frac{1}{2}\int^T_{0}\|\hat{g}_{\epsilon}-g^*\|^2_{L^2}\, dt\leq J_{\epsilon}(g)+ \frac{1}{2}\int^T_{0}\|g-g^*\|^2_{L^2}\, dt,
\label{2new_J}
\end{align}
for any $g\in L^{2}(0, T; V^0_n(\Gamma))$. As proven in Theorem~\ref{cong_opt_pair}, there exists a  subsequence, still denoted by $\{(\hat{g}_{\epsilon},\hat{v}_{\epsilon}, \hat{\theta}_{\epsilon})\}$, satisfying   
\begin{align}
&\hat{g}_{\epsilon}\to \hat{g}^*\ \ \text{strongly in} \  \ L^{2}(0, T; V^{0}_{n}(\Gamma)), \ \ \text{as}\ \ \epsilon\to0, \label{2cong_g}\\
&\hat{v}_{\epsilon}\to \hat{v}^* \  \ \text{strongly in} \ \   L^{2}(0, T; V^{0}_{n}(\Omega)),  \ \ \text{as}\ \ \epsilon\to0,\nonumber\\
&\hat{\theta}_{\epsilon}\to \hat{\theta}^*  \ \ \text{strongly in}  \ \ L^{2}(\Omega), \ \text{uniformly in}\  [0, T], \ \ \text{as}\ \ \epsilon\to0.
 \nonumber
\end{align}
By  the  weakly lower semicontinuity of norms, we can pass  to the limit in \eqref{2new_J}  and obtain 
\begin{align}
J(\hat{g}^*)+ \frac{1}{2}\int^T_{0}\|\hat{g}^*-g^*\|^2_{L^2(\Gamma)}\, dt\leq J(g)+ \frac{1}{2}\int^T_{0}\|g-g^*\|^2_{L^2(\Gamma)}\, dt,
\end{align}
for all $g\in L^{2}(0, T; V^0_{n}(\Gamma))$. In particular, setting $g=g^*$ yields 
\begin{align}
J(\hat{g}^*)+ \frac{1}{2}\int^T_{0}\|\hat{g}^*-g^*\|^2_{L^2(\Gamma)}\, dt\leq J(g^*),
\end{align}
which  indicates  
\[\int^T_{0}\|\hat{g}^*-g^*\|^2_{L^2}\, dt=0.\]
Therefore,  $g^*=\hat{g}^*$, and hence $v^*=\hat{v}^*$ and $\theta^*=\hat{\theta}^*$. Moreover, according to \eqref{2cong_g} we  get
\begin{align}
&\hat{g}_{\epsilon}\to g^*\quad \text{strongly in} \   L^{2}(0, T; V^{0}_{n}(\Gamma)),  \ \ \text{as}\ \ \epsilon\to0. \label{3cong_g}
\end{align}
Following the proof of  Theorem~\ref{thm_App_opt_cond},  we have   the optimality condition for the problem~$(\hat{P}_{\epsilon})$  given by
\begin{align}
\gamma \hat{g}^*_{\epsilon}+ \hat{g}^*_{\epsilon}-g^*=-L^{*}\mathbb{P}(\hat{\theta}_{\epsilon}\nabla {\hat{\rho}_{\epsilon}}), \label{App_hat_optcond}
\end{align}
where $ \hat{\rho}_{\epsilon}$ satisfies  \eqref{App_rho}--\eqref{App_rhoT}.  Letting $\epsilon\to 0$, we obtain  from  \eqref{2Ori_pt_cond}  and 
 \eqref{3cong_g} that
\begin{align*}
g^*=-\frac{1}{\gamma }L^{*}\mathbb{P}(\theta^*\nabla \rho^*), 
\end{align*}
which completes the proof.
\end{proof}

\section{Uniqueness of the optimal controller to $(P)$ for $d=2$}
\label{uniqueness}
In this section, we present the  uniqueness of  the optimal controller to the problem~$(P)$ for $d=2$ and $\gamma$ sufficiently large.
In this case  we set $ 0<\eta<1/4$. The main 
result is given by the following theorem. 

\begin{thm} \label{orig_uniq} 
Assume  $\theta_0\in L^{\infty}(\Omega)\cap H^1(\Omega)$, $v_0\in V^{2\eta}_n(\Omega)$ for  $d=2$, and $\gamma$  sufficiently large, there exists at most  one optimal controller $g\in U_{ad}$ to the problem~$(P)$, which is given by \eqref{Ori_pt_cond}.
\end{thm}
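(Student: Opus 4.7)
The plan is to treat the optimality condition \eqref{Ori_pt_cond} as a fixed-point relation and show that the associated map is a strict contraction once $\gamma$ is large. Suppose for contradiction that $(g^*_1,v^*_1,\theta^*_1,\rho^*_1)$ and $(g^*_2,v^*_2,\theta^*_2,\rho^*_2)$ are two optimal solutions to $(P)$, each satisfying \eqref{EQ01}--\eqref{ini}, \eqref{adj}--\eqref{adj_final}, and \eqref{Ori_pt_cond}. Set $\tilde g=g^*_1-g^*_2$, $\tilde v=v^*_1-v^*_2$, $\tilde\theta=\theta^*_1-\theta^*_2$, $\tilde\rho=\rho^*_1-\rho^*_2$. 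By linearity of the Stokes system, $\tilde v=L\tilde g$, and combining \eqref{5L_oper} at $s=0$ with the two-dimensional Sobolev embedding $V^{3/2}_n(\Omega)\hookrightarrow L^\infty(\Omega)$ yields
\[
\|\tilde v\|_{L^2(0,T;L^\infty(\Omega))}\le C\|\tilde g\|_{L^2(0,T;V^0_n(\Gamma))}.
\]
The differences $\tilde\theta$ and $\tilde\rho$ satisfy the linear transport systems
\begin{align*}
\partial_t\tilde\theta+v^*_1\cdot\nabla\tilde\theta&=-\tilde v\cdot\nabla\theta^*_2, & \tilde\theta(0)&=0,\\
-\partial_t\tilde\rho-v^*_1\cdot\nabla\tilde\rho&=\tilde v\cdot\nabla\rho^*_2, & \tilde\rho(T)&=\mathcal{A}^{-1}\tilde\theta(T).
\end{align*}

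Because each $g^*_i$ lies in $L^2(0,T;V^{3/2}_n(\Gamma))\cap H^{3/4}(0,T;V^0_n(\Gamma))\subset\mathbb{S}$ by Theorem~\ref{cong_opt_pair}, Lemma~\ref{App_EST_v} yields $\int_0^T\|\nabla v^*_i\|_{L^\infty}\,dt<\infty$, whereupon \eqref{1EST_te_H1} and its analogue along the backward characteristics give uniform a priori bounds $\sup_t\|\nabla\theta^*_i\|_{L^2}\le M_1$ and, propagating the $H^{2+\eta}$-regularity of $\rho^*_i(T)=\mathcal{A}^{-1}\theta^*_i(T)$ through the backward transport and using the 2D embedding $H^{1+\eta}\hookrightarrow L^\infty$, $\sup_t\|\nabla\rho^*_i\|_{L^\infty}\le M_2$. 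A standard $L^2$-energy estimate on $\tilde\theta$, using $\nabla\cdot v^*_1=\nabla\cdot\tilde v=0$ and $v^*_1\cdot n=\tilde v\cdot n=0$ to eliminate the transport term, produces
\[
\|\tilde\theta\|_{L^\infty(0,T;L^2)}\le M_1\int_0^T\|\tilde v\|_{L^\infty}\,dt\le C\sqrt{T}\,\|\tilde g\|_{L^2(0,T;V^0_n(\Gamma))}.
\]
An $H^1$-energy estimate on $\tilde\rho$ --- admissible precisely because the $L^\infty$-bound on $\nabla\rho^*_2$ controls the source term $\nabla(\tilde v\cdot\nabla\rho^*_2)$ --- together with $\|\tilde\rho(T)\|_{H^1}\le C\|\tilde\theta(T)\|_{L^2}$ yields
\[
\|\nabla\tilde\rho\|_{L^\infty(0,T;L^2)}\le C\|\tilde g\|_{L^2(0,T;V^0_n(\Gamma))}.
\]

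Subtracting the two optimality identities \eqref{Ori_pt_cond} and invoking the continuity of $L^*\mathbb{P}$ from $L^2(0,T;L^2(\Omega))$ into $L^2(0,T;V^0_n(\Gamma))$ established in the proof of Theorem~\ref{thm_App_opt_cond} via \eqref{2L_adj}, I obtain
\[
\|\tilde g\|_{L^2(0,T;V^0_n(\Gamma))}\le \frac{C}{\gamma}\bigl(\|\tilde\theta\,\nabla\rho^*_1\|_{L^2(0,T;L^2)}+\|\theta^*_2\,\nabla\tilde\rho\|_{L^2(0,T;L^2)}\bigr).
\]
Bounding the first summand by $M_2\sqrt{T}\,\|\tilde\theta\|_{L^\infty(0,T;L^2)}$ and the second by $\|\theta_0\|_{L^\infty}\|\nabla\tilde\rho\|_{L^2(0,T;L^2)}$, and inserting the two energy estimates above, I arrive at
\[
\|\tilde g\|_{L^2(0,T;V^0_n(\Gamma))}\le \frac{K}{\gamma}\,\|\tilde g\|_{L^2(0,T;V^0_n(\Gamma))},
\]
where $K=K(\theta_0,v_0,T)$ is independent of $\gamma$ and of the particular optimal pair. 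Choosing $\gamma>K$ forces $\tilde g=0$, and hence $\tilde v=\tilde\theta=\tilde\rho=0$, proving uniqueness.

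The main obstacle is verifying that $K$ is genuinely independent of the specific optimal pair: a priori the bounds $M_1,M_2$ depend on $\int_0^T\|\nabla v^*_i\|_{L^\infty}\,dt$ through Lemma~\ref{App_EST_v}, which in turn depends on $\|g^*_i\|_{\mathbb S}$. This is resolved by bootstrapping the optimality formula \eqref{Ori_pt_cond}: the cost inequality $J(g^*_i)\le J(0)$ furnishes a $\gamma^{-1/2}$-small $L^2(\Gamma)$-bound on $g^*_i$, which via the smoothing action of $L^*$ and the pointwise bound $\|\theta^*_i\|_{L^\infty}=\|\theta_0\|_{L^\infty}$ yields a uniform $\mathbb S$-bound for all $\gamma\ge\gamma_0$, closing the circle. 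The restriction to $d=2$ enters through two embeddings that fail at $d=3$: $V^{3/2}_n(\Omega)\hookrightarrow L^\infty(\Omega)$ (needed for $\tilde v$) and the $L^\infty$-propagation of $\nabla\rho^*_i$ (needed for the $H^1$-estimate of $\tilde\rho$), explaining the dimensional restriction in the statement.
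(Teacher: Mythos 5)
Your overall strategy --- a contraction argument on the difference of the two optimality identities, with $\gamma$ large --- is the same as the paper's, and your estimates for $\tilde v=L\tilde g$ and for $\|\tilde\theta\|_{L^\infty(0,T;L^2)}$ match the paper's treatment of $LG$ and $\vartheta$. The decisive step, however, has a gap: to bound $\|\theta^*_2\,\nabla\tilde\rho\|_{L^2(0,T;L^2)}$ you invoke an $H^1$-energy estimate on $\tilde\rho$, whose source term is $\tilde v\cdot\nabla\rho^*_2$; differentiating it produces $(\nabla\tilde v)^T\nabla\rho^*_2+(\nabla^2\rho^*_2)\,\tilde v$, so the estimate requires control of the \emph{second} derivatives of $\rho^*_2$, not merely the bound $\sup_t\|\nabla\rho^*_2\|_{L^\infty}\le M_2$ that you assert. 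Propagating $H^2$- or $W^{2,p}$-regularity of $\rho^*_2$ backward through the transport equation would in turn require something like $\int_0^T\|\nabla^2 v^*_1\|_{L^\infty}\,dt<\infty$, which is not available: the optimal velocity only lies in $L^2(0,T;V^{3}_n(\Omega))$, so $\nabla^2 v^*_1\in L^2(0,T;H^1(\Omega))$ and $H^1(\Omega)\not\hookrightarrow L^\infty(\Omega)$ even for $d=2$. Your proposed $M_2$ is therefore both unjustified at the level needed and, even if granted, insufficient to close the $H^1$ estimate for $\tilde\rho$.

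The paper circumvents exactly this obstruction by never estimating $\nabla\varrho$. It measures $G=-\frac{1}{\gamma}L^*\mathbb{P}(\vartheta\nabla\rho_1+\theta_2\nabla\varrho)$ through $\int_0^T\|LG\|_{L^\infty}\,dt$ and, using \eqref{3L_adj}, reduces the problem to bounding $\|\mathbb{P}^*(\theta_2\nabla\varrho)\|_{L^2(0,T;(H^{1+\varepsilon}(\Omega))')}$ by duality. In the pairing with a test function $\psi\in H^{1+\varepsilon}(\Omega)$ it integrates by parts, exploiting $\nabla\cdot(\mathbb{P}\psi)=0$ and $(\mathbb{P}\psi)\cdot n|_\Gamma=0$, to transfer the derivative from $\varrho$ onto $\theta_2$; the resulting bound needs only $\sup_t\|\varrho\|_{L^2}$ (from a plain $L^2$-energy estimate on the backward equation, inequality \eqref{EST_varrho}) and $\sup_t\|\nabla\theta_2\|_{L^2}$ (from \eqref{1EST_te_H1}). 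The two-dimensionality enters there through $H^{1+\varepsilon}(\Omega)\hookrightarrow L^\infty(\Omega)$ for the test functions, which is the paper's stated reason the argument fails for $d=3$. If you replace your $H^1$ estimate on $\tilde\rho$ by this integration-by-parts/duality device, the rest of your argument goes through. Your closing remark about making the contraction constant independent of the particular optimal pair via $J(g^*_i)\le J(0)$ is a reasonable refinement that the paper does not carry out (its threshold $\gamma>C(\theta_0,v_0,g_1,g_2,T)$ depends on the two solutions), but it does not repair the gap above.
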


\begin{proof}
Assume  that there are two pair of optimal solutions  to the problem~$(P)$, denoted by  $(g_{i}, \theta_{i}, v_i), i=1,2$. Then from \eqref{Ori_pt_cond}, Lemma \ref{App_EST_v}, and \eqref{1EST_te_H1} we have
\[g_i\in L^{2}(0, T; V^{3/2}_{n}(\Gamma))\cap H^{3/4}(0, T; V^{0}_{n}(\Gamma)),\]
\begin{align}
\int^T_{0}\|\nabla v_i\|_{L^\infty}\leq C( v_0, g_i, T), \quad \text{and}\quad \sup_{t\in[0,T]}\|\nabla \theta_i\|_{L^2}\leq C(\theta_{0}, v_0, g_i, T).
\label{0EST_uniq}
\end{align}
The corresponding solutions to the adjoint problem \eqref{App_rho}--\eqref{App_rhoT} are denoted by $\rho_{i}, i=1,2$.
 Then $G=g_{1}-g_{2}$, $\vartheta=\theta_{1}-\theta_{2}$, $LG=v_1-v_2$, and $\varrho=\rho_{1}-\rho_{2}$
satisfy the  system 
  \begin{align}
   & \frac{\partial \vartheta}{\partial t}+ (LG)\cdot \nabla\theta_{1}+v_2\cdot \nabla\vartheta=0, \quad
    \vartheta(0)=0,  \label{diff_theta1}\\
    &-\frac{ \partial{\varrho}}{\partial t}-(LG)\cdot \nabla  \rho_{1}-v_2\nabla\varrho=0,  \quad  
    \varrho(T) =\Lambda^{-2}\vartheta(T),\label{diff_varho1}
      \end{align}
      and 
        \begin{align}
 G=-\frac{1}{\gamma}L^{*}(\mathbb{P}(\vartheta \nabla \rho_{1}+\theta_{2}\nabla \varrho)). \label{EST_G}
      \end{align}
Applying  $L^2$-estimate on $\vartheta$ gives
\begin{align*}
  & \frac{d \|\vartheta\|^2_{L^2}}{2d t}= \int_{\Omega} (LG)\cdot \nabla \theta_{1} \vartheta\, dx
  \leq\|LG\|_{L^\infty}\|\nabla \theta_{1} \|_{L^2} \|\vartheta\|_{L^2},
  \end{align*}
from where
  \begin{align}
  \sup_{t\in [0, T]}  \|\vartheta\|_{L^2}
 &\leq \int^T_{0}  \|LG\|_{L^\infty}\|\nabla \theta_{1} \|_{L^2}\, dt
 \leq  \sup_{t\in [0, T]} \|\nabla \theta_{1} \|_{L^2} \int^T_{0}  \|LG\|_{L^\infty}\, dt.  \label{EST_vartheta}
 \end{align}
By \eqref{EST_G}, \eqref{5L_oper}  and \eqref{2L_adj}, we get
 \begin{align}
&\int^T_{0}  \|LG\|_{L^\infty}\, dt\leq C
 \int^T_{0}  \|L\frac{1}{\gamma}L^*\mathbb{P}(\vartheta \nabla \rho_{1}+\theta_{2}\nabla \varrho)\|_{H^{1+\varepsilon}(\Omega)}\, dt, 
 \quad 0<\varepsilon\leq 1/2, \nonumber \\
&\qquad\leq C\frac{1}{\gamma} \sqrt{T}\|LL^*\mathbb{P}(\vartheta \nabla \rho_{1}+\theta_{2}\nabla \varrho)\|_{L^2(0, T; H^{1+\varepsilon}(\Omega))}
 \nonumber \\
&\qquad\leq  C\frac{1}{\gamma} \sqrt{T}\|L^*\mathbb{P}(\vartheta \nabla \rho_{1}+\theta_{2}\nabla \varrho)\|_{L^2(0, T; L^2(\Gamma))}  \label{1EST_LG_P} \\
&\qquad\leq C\frac{1}{\gamma} \sqrt{T}\|\mathbb{P}^*(\vartheta \nabla \rho_{1}+\theta_{2}\nabla \varrho)\|_{L^2(0, T; (H^{ 1+\varepsilon}(\Omega))')} \label{2EST_LG_P} \\
& \qquad
  \leq  C\frac{1}{\gamma} \sqrt{T}\left(  \|\mathbb{P}^*(\vartheta \nabla \rho_{1})\|_{L^2(0, T; (H^{1+\varepsilon}(\Omega))')}
  +\| \mathbb{P}^*(\theta_{2}\nabla \varrho)\|_{L^2(0, T; (H^{1+\varepsilon}(\Omega))')}\right), \label{0EST_LG}
  \end{align}
 From \eqref{1EST_LG_P} to \eqref{2EST_LG_P} we  used the property of $L^*$ given by \eqref{3L_adj} and replaced $\mathbb{P}$ by $\mathbb{P}^*$ due to Remark \ref{projector}. For the first term on the right hand side of \eqref{0EST_LG} we use  duality \eqref{duality_P} and obtain 
 \begin{align}
& \int^T_{0} \| \mathbb{P}^*(\vartheta \nabla \rho_{1})\|^2_{(H^{1+\varepsilon}(\Omega))'}\, dt
  =\int^T_{0}\left(\sup_{\psi\in H^{1+\varepsilon}(\Omega)}\frac{|\int_{\Omega }(\vartheta \nabla \rho_{1})\cdot (\mathbb{P} \psi)\, dx|}{\|\psi\|_{H^{1+\varepsilon}}}\right)^2\, dt, \nonumber\\
  &\qquad\leq C \int^T_{0}\left( \sup_{\psi\in H^{1+\varepsilon}(\Omega)}\frac{\|\vartheta\|_{L^2} \|\nabla \rho_{1}\|_{L^2} \|\psi\|_{L^\infty}}{\|\psi\|_{H^{1+\varepsilon}}}\right)^2\, dt
  \label{0fail_d3}\\
  &\qquad\leq C\int^T_{0}\left(\sup_{\psi\in H^{1+\varepsilon}(\Omega)}\frac{\|\vartheta\|_{L^2} \|\nabla \rho_{1}\|_{L^2} \|\psi\|_{H^{1+\varepsilon}}}
  {\|\psi\|_{H^{1+\varepsilon}}} \right)^2\, dt \label{1fail_d3} \\
  &\qquad \leq CT(\sup_{t\in [0, T]}\|\vartheta\|_{L^2} )^2 (\sup_{t\in [0, T]} \|\nabla \rho_{1}\|_{L^2})^2.
    \label{2EST_vartheta}
  \end{align}

To estimate the second term on the right hand of \eqref{1EST_LG}, we have
   \begin{align}
& \int^T_{0} \|\mathbb{P}^*(\theta_{2}\nabla \varrho)\|^2_{(H^{1+\varepsilon}(\Omega))'}\, dt
  =\int^T_{0}\left(\sup_{\psi\in H^{1+\varepsilon}(\Omega)}\frac{|\int_{\Omega }(\theta_{2}\nabla \varrho)\cdot (\mathbb{P}\psi)\, dx|}{\|\psi\|_{H^{1+\varepsilon}}}\right)^2\, dt,\nonumber\\
  &\qquad =\int^T_{0}\left(\sup_{\psi\in H^{1+\varepsilon}(\Omega)}\frac{|\int_{\Omega }\theta_{2}\nabla\cdot ( \varrho (\mathbb{P}\psi))\, dx
  -\int_{\Omega }\theta_{2}\varrho\nabla \cdot (\mathbb{P}\psi)\, dx|}{\|\psi\|_{H^{1+\varepsilon}}}\right)^2 \, dt\nonumber\\
  &\qquad =\int^T_{0}\left(\sup_{\psi\in H^{1+\varepsilon}(\Omega)}\frac{|\int_{\Gamma }\theta_{2} \varrho(\mathbb{P}\psi)\cdot n\, dx
  -\int_{\Omega }\nabla \theta_{2} \cdot (\varrho(\mathbb{P}\psi))\, dx|}{\|\psi\|_{H^{1+\varepsilon}}}\right)^2\, dt\nonumber\\
 &\qquad \leq C \int^T_{0} \left(\sup_{\psi\in H^{1+\varepsilon}(\Omega)}\frac{\|\nabla \theta_{2}\|_{L^2} \|\varrho\|_{L^2}\|\psi\|_{L^{\infty}}
 }{\|\psi\|_{H^{1+\varepsilon}}}\right)^2\, dt \label{2fail_d3}\\
 &\qquad \leq C \int^T_{0}\left(\sup_{\psi\in H^{1+\varepsilon}(\Omega)}\frac{\|\nabla \theta_{2}\|_{L^2} \|\varrho\|_{L^2}\|\psi\|_{H^{1+\varepsilon}}}
 {\|\psi\|_{H^{1+\varepsilon}}}\right)^2\, dt\label{3fail_d3}\\
  &\qquad \leq CT(\sup_{t\in [0, T]}\|\nabla \theta_2\|_{L^2})^2 ( \sup_{t\in [0, T]}\|\varrho\|_{L^2})^2.
   \label{3EST_vartheta}
  \end{align}
Combining  \eqref{0EST_LG}  with \eqref{2EST_vartheta}--\eqref{3EST_vartheta}  yields
\begin{align}
\int^T_{0}  \|LG\|_{L^\infty}\, dt\leq &
C\frac{1}{\gamma}T\left(\sup_{t\in [0, T]}\|\vartheta\|_{L^2}  \sup_{t\in [0, T]} \|\nabla \rho_{1}\|_{L^2}
+\sup_{t\in [0, T]}\|\nabla \theta_2\|_{L^2} \sup_{t\in [0, T]}\|\varrho\|_{L^2}\right). \label{1EST_LG}
\end{align}
It remains  to estimate 
$\|\varrho\|_{L^2(\Omega)}$.
Let  $\tilde{\varrho}(t)=\varrho(T-t)$, $t\in [0, T]$. 
Then $\tilde{\varrho}(t)$ satisfies 
  \begin{align}
   &\frac{ \partial{\tilde{\varrho}}}{\partial t}-(LG(T-t))\cdot \nabla  \rho_1(T-t)-v_2(T-t)\nabla\tilde{\varrho}=0, \label{back_rho1}\\
  &  \tilde{\varrho}(0)= \Lambda^{-2}\vartheta(T). \label{2back_rho1}
      \end{align}
      Applying $L^2$-estimate for  $\tilde{\varrho}$ yields 
      \begin{align}
\sup_{t\in [0, T]}  \|\tilde{\rho}\|_{L^2}
&\leq \int^T_{0}  \|LG(T-t)\|_{L^\infty}\|\nabla \rho_1 (T-t)\|_{L^2}\, dt+\|\tilde{\rho}(0)\|_{L^2}\nonumber\\
&\leq \sup_{t\in [0, T]}\|\nabla \rho_1 \|_{L^2}\int^T_{0}  \|LG\|_{L^\infty}\, dt+C\|\vartheta(T)\|_{L^2}.
\label{EST_varrho}
 \end{align}
Now combining  \eqref{1EST_LG} with   \eqref{0EST_uniq},   \eqref{EST_vartheta}, and  \eqref{EST_varrho}  gives
\begin{align}
&\int^T_{0}  \|LG\|_{L^\infty}
\leq C(\theta_{0}, v_0, g_1, g_2, T)\frac{1}{\gamma}
( \sup_{t\in [0, T]}\|\vartheta\|_{L^2} +\sup_{t\in [0, T]}\|\varrho\|_{L^2} )\nonumber\\
&\quad\leq C(\theta_{0}, v_0, g_1, g_2, T)\frac{1}{\gamma}
\bigg[\sup_{t\in [0, T]} \|\nabla \theta_{1} \|_{L^2}\int^T_{0}  \|LG\|_{L^\infty}\, dt\nonumber\\ 
&\qquad+\big(\sup_{t\in [0, T]}\|\nabla \rho_1 \|_{L^2}\int^T_{0}  \|LG\|_{L^\infty}\, d\tau
+\sup_{t\in [0, T]} \|\nabla \theta_{1} \|_{L^2}\int^T_{0}  \|LG\|_{L^\infty}\, dt \big)\bigg]\nonumber\\
&\quad\leq C(\theta_{0}, v_0, g_1, g_2, T)\frac{1}{\gamma}
\int^T_{0}  \|LG\|_{L^\infty}\, d\tau.  \label{EST_LG}
\end{align}
If we let $\gamma$ be sufficiently large so that
\[C(\theta_{0}, v_0, g_1, g_2, T)\frac{1}{\gamma}<1\quad \text{or}\quad \gamma> C(\theta_{0}, v_0, g_1, g_2, T),\]
then
\begin{align}
\int^T_{0}  \|LG\|_{L^\infty}\, dt=0.\label{EST_LG1}
\end{align}
Lastly, by the linearity of $L$, we derive  that $G=0$. 
 Uniqueness of the optimal solution   for large $\gamma$ and $d=2$ is established.

\end{proof}

\begin{remark}
The uniqueness for $d=3$ can not be carried out by the current approach due to the failure from  \eqref{0fail_d3} to \eqref{1fail_d3} and from 
\eqref{2fail_d3}  to \eqref{3fail_d3}. This is because when $d=3$, the regularity of the  test function $\psi$  can not go beyond  $H^{3/2-\epsilon}$, where  $\varepsilon$ is arbitrarily small. Therefore,  the $L^\infty$-norm of $\psi$ in the numerators of   \eqref{0fail_d3} and  \eqref{2fail_d3} can not be bounded.
 \end{remark}


%

\section{Conclusions}

Compared to the optimality system  presented in  \cite[Theorem~4.1]{hu2017boundary}, the current approach of constructing   an approximating control problem  provides a  much more transparent result. In addition,   uniqueness of the optimal controller can be derived for $d=2$.
These  will greatly contribute to implementing the solution   by employing  the gradient based iterative   schemes  in our future work.\\

\noindent \textbf{Acknowledgments}\,\, The author would like to thank  Irena Lasiecka for
her valuable suggestions which improved  the paper.
The author was partially supported by the  NSF grant DMS-1813570,  the DIG and FY 2018 ASR+1 Program at the Oklahoma State University.

\section*{References}

\end{document}